\newtheorem{theorem}{Theorem}[section]
\newtheorem{lemma}[theorem]{Lemma}
\newtheorem{proposition}[theorem]{Proposition}
\newtheorem{remark}[theorem]{Remark}
\theoremstyle{definition}
\theoremstyle{remark}
\newtheorem*{note*}{Note}
\numberwithin{equation}{section}
\newcommand{\rank}{\mathop{\operator@font rank}}
\newcommand{\conv}{\mathop{\operator@font conv}}
\newcommand{\vol}{\mathrm{vol}}
\newcommand{\onetagright}{\tagsleft@false}
\newcommand{\ls}{\leqslant}
\newcommand{\gr}{\geqslant}
\begin{document}
\small

\title{\bf Regular functional covering numbers}

\author{Apostolos Giannopoulos and Natalia Tziotziou}

\date{}

\maketitle

\begin{abstract}
\footnotesize We establish the existence of a \emph{regular} functional $M$-position, in the sense of Pisier, for geometric log-concave functions. This provides a functional analogue of Pisier's regular $M$-positions for convex bodies and yields uniform control of covering numbers at all scales. Specifically, we show that every isotropic geometric log-concave function $f:\mathbb{R}^n \to [0,\infty)$ satisfies, for all $t\gr 1$,
$$\max\Bigl\{N(f,t\odot g),N(g,t\odot f^{\ast})\Bigr\}\ls\exp\!\left(\gamma_{n} n/t\right)\quad\text{and}\quad  
\max\Bigl\{N(f^{\ast},t\odot g),N(g,t\odot f)\Bigr\}\ls\exp\!\left(\delta_{n} n/t\right),$$
where $f^{\ast}$ denotes the Legendre dual of $f$, $(t\odot f)(x)=f(x/t)$ is the $t$-homothety of $f$, and $\gamma_n \ls c(\ln n)^2$, $\delta_n\ls c\ln n$. Our result shows that the isotropic position of a log-concave function already provides an almost $1$-regular functional $M$-position.
\end{abstract}

\section{Introduction}\label{section-1}

The study of covering numbers lies at the intersection of asymptotic geometric analysis and
high-dimensional probability. Sharp covering estimates have found important applications
in analysis, geometry, probability and combinatorics. Milman's theory of $M$-positions reveals 
that every convex body has a highly regular affine image whose covering behavior, and that of its polar, exhibit
near-optimal exponential bounds. Recent developments have extended these ideas to log-concave functions,
uncovering a rich functional counterpart to classical convex-geometric notions.

The purpose of this paper is to show that geometric log-concave functions
admit a regular functional $M$-position, in the sense of Pisier, and that, remarkably, 
the isotropic position already provides such a regular position. In particular, isotropic log-concave 
functions satisfy almost $1$-regular covering estimates at all scales.

\smallskip

Let $K$ and $T$ be convex bodies in $\mathbb{R}^{n}$. The covering number $N(K,T)$ is the smallest number of translates of $T$
needed to cover $K$:
$$N(K,T)= \min\Bigl\{ N\in\mathbb{N} : \exists\, x_1, \ldots, x_N \in \mathbb{R}^n
\ \text{such that} \ K \subseteq \bigcup_{j=1}^{N} (x_{j}+T)\Bigr\}.$$
A classical theorem of V.~Milman~\cite{VMilman-1986} asserts that every
centered convex body $K$ can be placed in $M$-position, namely there
exists a linear image $\widetilde K$ with $\vol_{n}(\widetilde K)=\vol_{n}(B_{2}^{n})$
such that
\begin{equation}  \label{betaMposition}
\max\Bigl\{N(\widetilde{K}, B_2^n),\ N(B_2^n, \widetilde{K}),\
N(\widetilde{K}^{\circ}, B_2^n),\ N(B_2^n, \widetilde{K}^{\circ})\Bigr\}\;\ls\; \exp(\beta n),
\end{equation}
for an absolute constant $\beta>0$, where $B_2^n$ is the Euclidean unit ball and 
$\widetilde{K}^{\circ}$ is the polar body of $\widetilde{K}$. Background material on convex bodies
is collected in Section~\ref{section:2}.

\smallskip

The framework of covering numbers was extended to functions by Artstein-Avidan and Slomka in
\cite{Artstein-Slomka-2015} (see also the earlier work \cite{Artstein-Raz-2011} of Artstein-Avidan
and Raz). Given measurable $f,g:\mathbb{R}^{n}\to[0,\infty)$, the functional covering number
of $f$ by $g$ is defined by
$$N(f,g)= \inf\Bigl\{ \mu(\mathbb{R}^{n}) : \mu\ast g \gr f \Bigr\},$$
where the infimum runs over non-negative Borel measures $\mu$ satisfying
$$(\mu\ast g)(x) = \int_{\mathbb{R}^n} g(x-t)\, d\mu(t) \gr f(x)\quad \text{for all} \ x\in\mathbb{R}^n.$$

Let $LC_{g}(\mathbb{R}^{n})$ denote the class of geometric log-concave functions; these are the upper 
semi-continuous log-concave functions $f:\mathbb{R}^{n}\to[0,\infty)$ with $f(0)=\|f\|_{\infty}=1$. If 
$f=e^{-\varphi}\in LC_{g}(\mathbb{R}^{n})$, its Legendre dual is $f^{\ast}=e^{-\mathcal L\varphi}$, where
$$\mathcal{L}\varphi(x)= \sup_{y\in\mathbb{R}^{n}}\bigl\{\langle x,y\rangle - \varphi(y)\bigr\}$$
is the Legendre transform of $\varphi$. In the functional setting, the dual function $f^{\ast}$ plays a role analogous to that 
of the polar body in classical convex geometry: many geometric inequalities relating 
a convex body to its polar admit functional counterparts involving a log-concave 
function and its Legendre dual. A notable example is the functional Blaschke--Santal\'o inequality,
\begin{equation*} \int_{\mathbb{R}^n} \exp (-\varphi(x))\,dx\cdot \int_{\mathbb{R}^n} \exp (-\mathcal{L}\varphi(x))\,dx 
\ls (2\pi)^n.\end{equation*}
The natural analogue of the Euclidean ball is the Gaussian
$$g(x)=\exp\bigl(-\tfrac{1}{2}|x|^{2}\bigr),\qquad\text{for which } g^{\ast}=g.$$
Artstein-Avidan and Slomka~\cite{Artstein-Slomka-2021} established the existence of a
functional version of Milman's $M$-position for geometric log-concave
functions.

\begin{theorem}[Artstein--Slomka] \label{th:M-covering}
Let $f:\mathbb{R}^{n}\to[0,\infty)$ be a geometric log-concave function.
There exists $T\in GL_{n}$ such that $\widetilde f=f\circ T$ satisfies
$\int \widetilde f = (2\pi)^{n/2}$ and
$$\max\Bigl\{N(\widetilde f,g),\, N(\widetilde f^{\ast},g),\, N(g,\widetilde f),\, N(g,\widetilde f^{\ast})
\Bigr\} \;\ls\; C^{n},$$
for an absolute constant $C>0$.
\end{theorem}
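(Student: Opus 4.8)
The plan is to reduce the functional $M$-position to Milman's $M$-position for convex bodies, using a dictionary between geometric log-concave functions and convex bodies together with the multiplicative behaviour of functional covering numbers under this correspondence. The natural device is the level-set/support correspondence: to a geometric log-concave function $f=e^{-\varphi}$ one associates convex bodies built from its level sets $\{f\gr s\}$ (equivalently the epigraph of $\varphi$), and to a convex body $K$ one associates the log-concave function $\mathbf 1_K$ or an exponential profile over $K$. Under such a correspondence, the Legendre dual $f^{\ast}$ matches polarity, the Gaussian $g$ matches the Euclidean ball up to controlled constants, the action of $GL_n$ on $f$ matches the linear action on the associated body, and — crucially — functional covering numbers of the form $N(f,g)$ are comparable (on the exponential scale, i.e.\ up to constants of the form $C^n$) to the classical covering numbers of the associated bodies by the associated Euclidean-ball-like object.

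First I would make the correspondence precise. Fix a level, say consider $K_1(f)=\{x: f(x)\gr e^{-1}\}$ or, better, work with the ``convex body of $f$'' obtained by lifting: $\widehat{K}(f)=\{(x,s)\in\R^n\times\R: 0\ls s, \ s\ls \text{something}, \ \varphi(x)+\ldots\}$, chosen so that $\widehat{K}(f^{\ast})=\widehat{K}(f)^{\circ}$ up to a fixed affine normalization (this is the standard Ball--Artstein-Avidan--Klartag--Milman type lift into dimension $n+1$). Then I would record two facts, each true up to multiplicative constants $C^n$: (i) $N(f,g)\ls C^n\, N(\widehat{K}(f), B_2^{n+1})$ and the three companion inequalities, obtained by discretizing the log-concave function into a bounded number (independent of $n$, say a constant times $n$) of level sets and covering each level set by translates using the corresponding covering of the body; (ii) these pass through $GL_n$ equivariantly, so that putting $\widehat K(f)$ in Milman $M$-position corresponds to a linear image $\widetilde f=f\circ T$ of $f$.

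Then the proof assembles as follows. Given $f\in LC_g(\R^n)$, form $\widehat K(f)$; apply Milman's theorem~\eqref{betaMposition} to get a linear image $\widetilde{\widehat K}$ with $\vol(\widetilde{\widehat K})=\vol(B_2^{n+1})$ and all four covering numbers $N(\widetilde{\widehat K},B_2^{n+1})$, $N(B_2^{n+1},\widetilde{\widehat K})$, $N(\widetilde{\widehat K}^{\circ},B_2^{n+1})$, $N(B_2^{n+1},\widetilde{\widehat K}^{\circ})$ bounded by $e^{\beta(n+1)}$; translate the linear map back to a $T\in GL_n$ acting on $f$; invoke the volume normalization to convert $\vol(\widetilde{\widehat K})=\vol(B_2^{n+1})$ into $\int\widetilde f=(2\pi)^{n/2}$ (adjusting $T$ by a scalar, which only costs another $C^n$); and finally use step~(i) in both directions, together with $g^{\ast}=g$ and $\widehat K(f^{\ast})=\widehat K(f)^{\circ}$, to transfer each of the four body-covering bounds to the four function-covering bounds $N(\widetilde f,g),N(\widetilde f^{\ast},g),N(g,\widetilde f),N(g,\widetilde f^{\ast})$. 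Absorbing all the $C^n$ factors and $e^{\beta n}$ into a single absolute constant $C$ finishes the proof.

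The main obstacle I anticipate is step~(i): establishing that the functional covering numbers are sandwiched between the body covering numbers with only a $C^n$ loss, in all four configurations simultaneously. The subtlety is that covering a log-concave function by (measures built from) translates of the Gaussian is not literally the same as covering finitely many level sets by translates of a ball — one must handle the infinitely many scales of the function, control the tails, and in the reverse direction convert a measure $\mu$ with $\mu\ast g\gr f$ back into a covering of a convex body by balls. The right technical tool here is the observation (from Artstein-Avidan--Slomka and Artstein-Avidan--Raz) that $N(f,g)$ is, up to $C^n$, governed by a single well-chosen level set of $f$ together with a Gaussian-volume normalization, reducing everything to the classical $N(K,B_2^n)$; making this quantitative, uniformly over the class $LC_g$, and checking it is compatible with Legendre duality is where the real work lies.
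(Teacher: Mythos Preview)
The paper does not prove Theorem~\ref{th:M-covering}; it is quoted from \cite{Artstein-Slomka-2021} as background. So there is no proof here to compare against directly. That said, the paper does prove the closely related Theorem~\ref{th:isotropic-milman}, and the methodology there differs from your sketch in a way that exposes a real gap in your plan.

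Your main proposal is to lift $f$ to a body $\widehat K(f)\subset\mathbb{R}^{n+1}$ via the epigraph of $\varphi$, apply Milman's $M$-position in $\mathbb{R}^{n+1}$, and then ``translate the linear map back to a $T\in GL_n$ acting on $f$''. This last step does not go through: a general $S\in GL_{n+1}$ that puts $\widehat K(f)$ in $M$-position has no reason to respect the splitting $\mathbb{R}^n\times\mathbb{R}$; it will typically mix the spatial coordinates with the epigraph coordinate, and then there is no $T\in GL_n$ for which $\widehat K(f\circ T)=S(\widehat K(f))$. Without this descent you cannot produce the required $\widetilde f=f\circ T$, so the reduction collapses. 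This is a structural obstruction, not a detail to be cleaned up.

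The approach actually used in the paper (for Theorems~\ref{th:regular-M-covering} and \ref{th:isotropic-milman}) stays in $\mathbb{R}^n$ and is closer to the alternative you mentioned and then set aside. One associates to $f$ a \emph{single} level set at depth proportional to $n$, namely $R_f=\{x:f(x)\gr e^{-50n}\}$, and uses the pointwise bound
\[
f(x)\ls \mathds{1}_{R_f}(x)+\exp\bigl(-50n\,\|x\|_{R_f}\bigr)
\]
together with a geometric decomposition of the exponential tail into dilates of $R_f$ to reduce $N(f,g)$ to classical covering numbers $N(R_f,sB_2^n)$. The Gaussian is handled by an analogous annular decomposition. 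For the dual, the Fradelizi--Meyer inclusion $50n\,(R_f)^{\circ}\subseteq R_{f^{\ast}}\subseteq 100n\,(R_f)^{\circ}$ makes polarity of $R_f$ match Legendre duality of $f$ up to bounded distortion. The linear map $T\in GL_n$ is then the one putting $R_f$ (equivalently the Ball body $K_{n+1}(f)$) in $M$-position. Two points worth noting against your level-set sketch: the level must be $e^{-cn}$, not $e^{-1}$, so that the tail series converges with only $C^n$ loss; and one does not need to control infinitely many level sets separately, only $R_f$ and its dilates.
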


In this work we establish the existence of a regular functional $M$-position, in the sense of Pisier, for geometric log-concave 
functions. Pisier~\cite{Pisier-1989} constructed an entire family of $M$-positions for any symmetric convex body $K\subset\mathbb{R}^{n}$, 
providing quantitative control of covering numbers at all scales.

\begin{theorem}[Pisier] \label{th:pisier-alpha-regular}
Let $0<\alpha<2$ and let $K\subset\mathbb{R}^{n}$ be a symmetric convex body. Then $K$ has a linear image $\widetilde K$ such that
$$\max\Bigl\{ N(\widetilde K, t B_{2}^{n}),\ N(B_{2}^{n}, t\widetilde K),\ N(\widetilde K^{\circ}, t B_{2}^{n}),\
N(B_{2}^{n}, t\widetilde K^{\circ})\Bigr\}\;\ls\;\exp\!\left(c(\alpha)\, n/t^{\alpha}\right)$$
for every $t\gr c(\alpha)^{1/\alpha}$, where $c(\alpha)=O\!\bigl((2-\alpha)^{-\alpha/2}\bigr)$ as $\alpha\to 2^{-}$.
\end{theorem}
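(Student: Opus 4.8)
\medskip
\noindent\textbf{Proof idea.}
The plan is to produce a single linear image $\widetilde K$ by a multi-scale construction and to verify the required estimate in two regimes: at the large scales it follows quickly from Sudakov-type inequalities in an $\ell$-optimal position, while at the moderate scales it rests on an iteration in the spirit of Milman's construction of the $M$-ellipsoid. The second regime is the heart of the matter.

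\emph{The $\ell$-position and the large scales.} Writing $\ell(A)=\brac{\E\,\norm{G}_{A}^{2}}^{1/2}$ for a convex body $A$ (where $G$ is a standard Gaussian vector on $\R^n$ and $\norm{\cdot}_A$ is the gauge of $A$), a result of Lewis, refined by Figiel--Tomczak-Jaegermann, produces a linear image $\widetilde K$ with $\ell(\widetilde K)=\ell(\widetilde K^{\circ})=:L$ and $L^{2}\ls c\,n\cdot k(X_{\widetilde K})$, where $X_{\widetilde K}=(\R^n,\norm{\cdot}_{\widetilde K})$ and $k(\cdot)$ is the $K$-convexity constant; by Pisier's $K$-convexity theorem and John's theorem $k(X_{\widetilde K})\ls c\log\brac{1+d(X_{\widetilde K},\ell_2^n)}\ls c\log n$, so $L^{2}\ls c\,n\log n$. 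Sudakov's inequality and its dual (Pajor--Tomczak-Jaegermann) then give, for every $t>0$,
$$\log N(\widetilde K, t B_2^n)\ls \frac{c}{t^{2}}\brac{\E\,\norm{G}_{\widetilde K^{\circ}}}^{2}\ls \frac{cL^{2}}{t^{2}},\qquad \log N(B_2^n, t\widetilde K)\ls \frac{c}{t^{2}}\brac{\E\,\norm{G}_{\widetilde K}}^{2}\ls \frac{cL^{2}}{t^{2}},$$
using $h_{\widetilde K}=\norm{\cdot}_{\widetilde K^{\circ}}$; interchanging the roles of $\widetilde K$ and $\widetilde K^{\circ}$ (legitimate since $L$ is the same for both) yields the same two bounds for the polar. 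Hence all four covering numbers are at most $\exp(cL^{2}/t^{2})$, which already proves the theorem, with exponent $2$, for every $t$ above a threshold of order $(\log n)^{1/(2-\alpha)}$.

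\emph{The moderate scales.} It remains to treat $c(\alpha)^{1/\alpha}\ls t\ls(\log n)^{1/(2-\alpha)}$, where the $K$-convexity logarithm above is too costly. The $\ell$-position alone no longer suffices, and one would instead build $\widetilde K$ by iteration: replace $K$, scale by scale, by an $M$-ellipsoid (in the sense of \eqref{betaMposition}) of a suitable combination of its truncations $2^{-j}\brac{K\cap 2^{j}B_2^n}$, using Milman's $M$-ellipsoid theorem and the quotient-of-subspace theorem so that each of the $\sim\log n$ steps is loss-free up to an absolute constant, while arranging that the final position still obeys the large-scale bounds above. One then estimates $\log N(\widetilde K,tB_2^n)$ by decomposing over the dyadic scales $t,2t,4t,\dots$ up to a good scale of order $(\log n)^{1/(2-\alpha)}$; the contributions combine into a series that converges precisely when $\alpha<2$, with sum of order $(2-\alpha)^{-1}$ times the expected main term. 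This is what absorbs the logarithm, turns $cn\log n/t^{2}$ into $c(\alpha)\,n/t^{\alpha}$ with $c(\alpha)=O\brac{(2-\alpha)^{-\alpha/2}}$, and forces the restriction $\alpha<2$. The polar bodies are handled either by running the iteration symmetrically (the $\ell$-norm being self-dual) or by invoking the now-known duality of entropy numbers in the case where one of the two bodies is Euclidean.

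\emph{The main obstacle.} The crux is the moderate-scale step: converting single-scale information into control that is uniform over \emph{all} scales without accumulating a factor that grows with $n$. A naive chaining loses an absolute constant at each of the $\sim\log n$ dyadic steps, which is fatal; the point of the iterated $M$-ellipsoid construction --- equivalently, of Pisier's interpolation inequalities for the entropy numbers $e_k(\widetilde K,B_2^n)$ and $e_k(B_2^n,\widetilde K)$ in this position --- is precisely to make these per-scale losses summable. It is that summability, a geometric series diverging exactly at $\alpha=2$, which produces both the sharp blow-up $c(\alpha)=O\brac{(2-\alpha)^{-\alpha/2}}$ and the reason the exponent $2$ is out of reach.
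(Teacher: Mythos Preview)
The paper does not prove this theorem: Theorem~\ref{th:pisier-alpha-regular} is stated as a known background result, attributed to Pisier~\cite{Pisier-1989}, and is quoted only to motivate the functional analogue in Theorem~\ref{th:regular-M-covering}. So there is no proof in the paper to compare your sketch against.

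On the merits of your sketch: the first half is correct and standard. Placing $K$ in $\ell$-position with $\ell(\widetilde K)=\ell(\widetilde K^{\circ})=L$ and $L^2\ls cn\log n$, and then applying Sudakov and dual Sudakov, does give all four covering numbers bounded by $\exp(cL^2/t^2)$. But this is an exponent-$2$ bound with an extra $\log n$, not the claimed $\alpha$-regular bound, and your splitting into ``large'' and ``moderate'' scales does not reflect how Pisier actually removes that logarithm.

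The genuine gap is in your moderate-scale paragraph. You propose to \emph{construct a new position} $\widetilde K$ by a multi-scale iteration of $M$-ellipsoids, distinct from the $\ell$-position, and then to reconcile it with the large-scale $\ell$-position bounds. This is not what Pisier does and, as written, it is not a proof: you never specify the iteration, and you do not explain how a single linear image can simultaneously be the output of the Milman-style iteration and sit in $\ell$-position. Pisier's actual argument uses \emph{one} position --- precisely the $\ell$-position you already have --- and proves, for every $0<\alpha<2$, an entropy inequality of the form
\[
\sup_{k\gr 1} k^{1/\alpha}\, e_k(\widetilde K, B_2^n)\;\ls\; c(\alpha)\,\ell(\widetilde K^{\circ}),
\qquad c(\alpha)=O\bigl((2-\alpha)^{-1/2}\bigr),
\]
together with the dual statement, via an interpolation argument (in Pisier's original paper, complex interpolation against a Gaussian; in later expositions, an iteration on the entropy numbers themselves). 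This single inequality, applied in the $\ell$-position, yields the $\alpha$-regular bound at \emph{all} scales at once; there is no two-regime decomposition. Your closing remark that the iterated $M$-ellipsoid construction is ``equivalent'' to Pisier's interpolation inequalities is not accurate: the former is Milman's route to the (non-regular) $M$-position, while the latter is what produces the $\alpha$-regularity and the sharp blow-up $c(\alpha)\sim (2-\alpha)^{-\alpha/2}$.
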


A convex body satisfying the above is said to be in $\alpha$-regular $M$-position.

\medskip

\textbf{Main results.} We prove that geometric log-concave functions admit an almost $1$-regular functional $M$-position.  
Moreover, we show that the isotropic position already enjoys this regularity. Background material on isotropic convex bodies 
and isotropic log-concave functions is provided in Section~\ref{section:2}. In our results, homothetic dilation of 
log-concave functions is given by
$$(t\odot f)(x)=f(x/t), \qquad t>0.$$

\begin{theorem}\label{th:regular-M-covering}
Let $f:\mathbb{R}^{n}\to[0,\infty)$ be an isotropic geometric log-concave function.
Then for every $t\gr 1$,
$$\max\Bigl\{N(f,\, t\odot g),\, N(g,\, t\odot f^{\ast})\Bigr\}
\;\ls\;\exp\!\left( \frac{\gamma_{n} n}{t} \right),$$
and 
$$\max\Bigl\{N(f^{\ast},\, t\odot g),\, N(g,\, t\odot f),\,\Bigr\}
\;\ls\;\exp\!\left( \frac{\delta_{n} n}{t} \right),$$
where $\gamma_{n} \ls c(\ln n)^{2}$ and $\delta_n\ls c\ln n$.
\end{theorem}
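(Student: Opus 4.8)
The plan is to reduce the functional covering estimates to the convex-body statement of Pisier (Theorem~\ref{th:pisier-alpha-regular}) applied, with $\alpha=1$, to a suitable convex body naturally attached to an isotropic log-concave function $f=e^{-\varphi}$, and then to control the loss incurred by the reduction by the isotropic constant $L_f$, which is bounded by $c(\ln n)^2$ by the current best bounds on the slicing problem. Concretely, I would pass from $f$ to its \emph{$K$-body} (the convex body whose radial function is built from the super-level sets of $f$, e.g. $K_{n+1}(f)=\{(x,s):s>0,\ \|x/s\|_{f}\le 1\}$ or the Ball body $K(f)$), which is the standard device for translating functional covering numbers into covering numbers of convex bodies: there are two-sided inequalities, due to Artstein-Avidan--Slomka and subsequent refinements, comparing $N(f,t\odot g)$ with $N(K(f), c\sqrt{t}\,B_2^{n+1})$ (and symmetrically for the polar body and for $f^{\ast}$, using that the Legendre dual corresponds to the polar of the $K$-body up to absolute constants). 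When $f$ is isotropic, the $K$-body is in a position comparable to the isotropic position of a convex body, with the comparison constants governed by $L_f$.

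The key steps, in order, are: (i) record the dictionary between $f,f^{\ast}$ and the $K$-body $K(f)$ and its polar, including the scaling $t\odot f \leftrightarrow \sqrt{t}$-dilation (or linear scaling, depending on the chosen normalization of $K(f)$), so that each of the four functional covering numbers is sandwiched between covering numbers of convex bodies by Euclidean balls; (ii) verify that isotropicity of $f$ forces $K(f)$ to be (a bounded perturbation of) an isotropic convex body, so that one may invoke Theorem~\ref{th:pisier-alpha-regular} with $\alpha=1$ \emph{at the given position} — i.e. no further linear map is needed, which is exactly the ``isotropic position is already regular'' phenomenon; (iii) feed the exponent $c(\alpha)n/t^{\alpha}=cn/t$ from Pisier through the dictionary, picking up a multiplicative factor that is a power of $L_f$ at each scale, and collect the factors into $\gamma_n^2 n/t$ with $\gamma_n\le c(\ln n)^2$; (iv) for the second pair of inequalities, observe that the covering numbers $N(f^{\ast},t\odot g)$ and $N(g,t\odot f)$ only require estimates on $M$-type and $M^{\ast}$-type parameters of $K(f)$ that are controlled by $\sqrt{n}L_f$ and $\sqrt{n}/L_f$ respectively, so the relevant product of parameters loses only one power of $L_f$ rather than two, giving $\delta_n\le c\ln n$. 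Throughout, the homothety convention $(t\odot f)(x)=f(x/t)$ and $g^{\ast}=g$ let one treat the Gaussian as the fixed ``Euclidean ball'' on the functional side.

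I expect the main obstacle to be step (ii) together with the bookkeeping in step (iii): one must show that the comparison between $f$ in isotropic position and $K(f)$ in isotropic convex-body position is tight enough that Pisier's regular estimate survives with only polylogarithmic loss, and in particular that the loss is at most $L_f^2$ uniformly in $t$ rather than, say, $L_f^2$ at scale $t=1$ but worse at small $t$. This requires a careful scale-by-scale analysis: at scale $t$ one compares $N(f,t\odot g)$ with $N(K(f),\sqrt{t}B_2^{n+1})$, and the constants in this comparison must be shown to be absolute (not $t$-dependent) so that the only place $L_f$ enters is through the need to bring $K(f)$ into the position where Pisier's bound with the given exponent applies. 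A secondary technical point is handling the dimension shift $n\mapsto n+1$ (harmless, absorbed into absolute constants) and ensuring that the four quantities — $N(f,t\odot g)$, $N(f^{\ast},t\odot g)$, $N(g,t\odot f)$, $N(g,t\odot f^{\ast})$ — each land on the correct one of the four Pisier covering numbers under the $K(f)\leftrightarrow K(f)^{\circ}$ correspondence, since $f\mapsto f^{\ast}$ is only \emph{approximately} dual to $K(f)\mapsto K(f)^{\circ}$.
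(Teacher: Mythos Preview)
Your proposal has two genuine gaps. First, Pisier's Theorem~\ref{th:pisier-alpha-regular} is an \emph{existence} statement: it produces some linear image of $K$ in $\alpha$-regular position, but says nothing about any specific position. In step~(ii) you assert that isotropicity of $f$ forces $K(f)$ into a position where Pisier's bound applies ``at the given position, no further linear map needed'' --- but this is precisely the content of the theorem you are trying to prove, and Pisier's result does not supply it. The paper instead invokes Proposition~\ref{prop:M-position}, which shows directly that isotropic convex bodies satisfy almost $2$-regular covering estimates; this is \emph{not} a corollary of Pisier but comes from E.~Milman's bound $M^{\ast}(K)\ls c\sqrt{n}(\ln n)^2$ and the Bizeul--Klartag bound $M(K)\ls c(\ln n)/\sqrt{n}$ fed into Sudakov and dual Sudakov. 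These mean-width estimates are the true source of the constants $\gamma_n\ls c(\ln n)^2$ and $\delta_n\ls c\ln n$; they have nothing to do with $L_f$, which by Klartag--Lehec is bounded by an absolute constant.

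Second, even granting a suitable regular position, your arithmetic with the exponent is off. The functional-to-geometric reduction costs a square root in the scale: the paper compares $N(f,t\odot g)$ with $N(R_f,\sqrt{t}\,r_f B_2^n)$, so to land on $\exp(cn/t)$ one needs a $2$-regular estimate $N(R_f,s r_f B_2^n)\ls\exp(cn/s^2)$ on the body side. Pisier with $\alpha=1$ would give only $\exp(cn/s)$, hence $\exp(cn/\sqrt{t})$ after the reduction, and Pisier's theorem does not reach $\alpha=2$ (the constant $c(\alpha)$ blows up there). Finally, the reduction itself is not a clean two-sided ``dictionary'' as you describe; the paper uses one-sided decompositions $f\ls\mathds{1}_{R_f}+\sum_k e^{-k}\mathds{1}_{\frac{k+1}{50n}R_f}$ and $g\ls\sum_k e^{-a^2k^2r_f^2/2}\mathds{1}_{a(k+1)r_fB_2^n}$ together with subadditivity and submultiplicativity of functional covering numbers, and the Fradelizi--Meyer inclusion $50n(R_f)^{\circ}\subseteq R_{f^{\ast}}\subseteq 100n(R_f)^{\circ}$ to handle the dual.
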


Thus the isotropic position yields a universal functional $M$-position whose
regularity exponent is arbitrarily close to~$1$.

\smallskip

We outline the main ideas of the proof.   
To each isotropic geometric log-concave function $f$ we associate the convex body
$$R_f:=\{x\in\mathbb{R}^n:f(x)\gr\exp(-50n)\}.$$
One may compare $R_f$ with the almost isotropic convex body $K_{n+1}(f)$ introduced by K.~Ball \cite{Ball-1988}, 
and verify that their geometric distance is bounded by an absolute constant.  
We then use an observation of the second named author \cite{Tziotziou-2025}, based on 
E.~Milman's sharp $M^{\ast}$-estimate \cite{EMilman-2014} and the recent optimal $M$-estimate of 
Bizeul and Klartag \cite{Bizeul-Klartag-2025} for isotropic convex bodies, to show that $R_f$ satisfies almost $2$-regular 
covering estimates.  
These geometric bounds are transferred to the functional covering numbers 
$N(f,\, t\odot g)$ and $N(g,\, t\odot f)$ using a decomposition of 
$\exp(-50n\|\cdot\|_{R_f})$, a corresponding decomposition of the Gaussian $g$, 
and basic structural properties of functional covering numbers.

A dual argument applies to the Legendre transform.  
A result of Fradelizi and Meyer \cite{Fradelizi-Meyer-2008} implies that
$$50n\,(R_f)^{\circ}\subseteq R_{f^{\ast}}\subseteq 100n\, (R_f)^{\circ}.$$
Combined with the Blaschke--Santal\'o and Bourgain--Milman inequalities, 
this yields analogous regularity for the covering numbers of $R_{f^{\ast}}$, and hence 
for $N(f^{\ast},\, t\odot g)$ and $N(g,\, t\odot f^{\ast})$.

\smallskip

We also show that isotropic geometric log-concave functions satisfy the conclusion of 
Theorem~\ref{th:M-covering}. That is, the isotropic position already provides a universal functional $M$-position in the sense of Milman.

\begin{theorem}\label{th:isotropic-milman}
Let $f:\mathbb{R}^{n}\to[0,\infty)$ be an isotropic geometric log-concave function.
Then,
$$\max\Bigl\{N(f,g),\, N(f^{\ast},g),\, N(g,f),\, N(g,f^{\ast})\Bigr\} \;\ls\; C^{n},$$
for an absolute constant $C>0$.
\end{theorem}

The proof combines ideas from the proof of Theorem~\ref{th:M-covering} 
in \cite{Artstein-Slomka-2021} with techniques employed in the proof of 
Theorem~\ref{th:regular-M-covering}.  Compared with Theorem~\ref{th:regular-M-covering}, 
Theorem~\ref{th:isotropic-milman} yields sharper estimates for the covering numbers 
in the regime where $t\gr 1$ is bounded above by a small power of $\ln n$.  
Thus, the isotropic position furnishes an efficient and robust functional 
$M$-position without requiring additional regularization assumptions.

\medskip 

Our second main result shows that similar regular estimates for the functional covering numbers hold true
for another choice of the dual of $f$, which is based on the polarity transform. 
The polar function $\varphi^{\circ}$ of a convex lower semi-continuous function $\varphi:\mathbb{R}^n\to [0,\infty]$
with $\varphi(0)=0$ is defined by the $\mathcal{A}$-transform of $\varphi$:
$$\varphi^{\circ}(x)=(\mathcal{A}\varphi)(x)=\sup_{y\in\mathbb{R}^n}\frac{\langle x,y\rangle -1}{\varphi(y)}.$$
The definition of the $\mathcal{A}$-transform appears in the book by Rockafellar \cite[page~136]{Rockafellar-book}, where it is 
also proved that it commutes with the Legendre transform. However, the polarity transform was introduced 
and studied in depth by Artstein-Avidan and Milman in \cite{Artstein-VMilman-2011} as the functional
extension of convex-body polarity and plays a central role in functional versions of the 
Blaschke--Santal\'{o} and Bourgain--Milman inequalities.

Consider the geometric log-concave function $f=e^{-\varphi}$. A result of V.~Milman and Rotem from \cite{VMilman-Rotem-2013}
implies that if we consider the scaled polar function
$$\varphi_{\mathcal{A}}(x)=(50n)^2\varphi^{\circ}(x/n)$$
and if we define $f_{\mathcal{A}}=e^{-\varphi_{\mathcal{A}}}$ then
$$n(R_f)^{\circ}\subseteq R_{f_{\mathcal{A}}}\subseteq 2n(R_f)^{\circ}.$$
Using the same strategy as in the proof of Theorem~\ref{th:regular-M-covering} we show that
$f_{\mathcal{A}}$ also admits regular covering estimates.

\begin{theorem}\label{th:regular-airy}
Let $f:\mathbb{R}^{n}\to[0,\infty)$ be an isotropic geometric log-concave function.
Then for every $t\gr 1$,
$$N(g,\, t\odot f_{\mathcal{A}})\;\ls\;\exp\!\left( \frac{\gamma_{n} n}{t} \right)
\quad \text{and} \quad  
N(f_{\mathcal{A}},\, t\odot g)\;\ls\;\exp\!\left( \frac{\delta_{n} n}{t} \right),$$
where $\gamma_{n} \ls c(\ln n)^{2}$ and $\delta_n\ls c\ln n$.
\end{theorem}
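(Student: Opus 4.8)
The plan is to mirror the proof strategy of Theorem~\ref{th:regular-M-covering}, replacing the Legendre dual $f^{\ast}$ throughout by $f_{\mathcal{A}}=e^{-\varphi_{\mathcal{A}}}$ and replacing the Fradelizi--Meyer inclusions $50n(R_f)^{\circ}\subseteq R_{f^{\ast}}\subseteq 100n(R_f)^{\circ}$ by the Milman--Rotem inclusions $n(R_f)^{\circ}\subseteq R_{f_{\mathcal{A}}}\subseteq 2n(R_f)^{\circ}$. First I would recall from the proof of Theorem~\ref{th:regular-M-covering} that $R_f=\{f\gr e^{-50n}\}$ is, up to absolute geometric distance, a multiple of the isotropic body $K_{n+1}(f)$, and hence $(R_f)^{\circ}$ satisfies almost $2$-regular covering estimates against multiples of $B_2^n$ via the sharp $M^{\ast}$-estimate of E.~Milman and the optimal $M$-estimate of Bizeul--Klartag. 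The inclusion $n(R_f)^{\circ}\subseteq R_{f_{\mathcal{A}}}\subseteq 2n(R_f)^{\circ}$ says that $R_{f_{\mathcal{A}}}$ is, up to a factor $2$, the body $n(R_f)^{\circ}$, so it inherits the same regular covering behavior against Euclidean balls; this plays exactly the role that the estimates for $R_{f^{\ast}}$ played before, the factor $n$ versus $100n$ being absorbed into the absolute constants.

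Next I would transfer these geometric estimates to the functional level. For the bound $N(g,\, t\odot f_{\mathcal{A}})\ls \exp(\gamma_n^2 n/t)$ one runs the same argument used in Theorem~\ref{th:regular-M-covering} to bound $N(g,\, t\odot f)$: decompose the Gaussian $g=\exp(-\tfrac12|\cdot|^2)$ using a covering of a Euclidean ball by translates of (a dilate of) $R_{f_{\mathcal{A}}}$, combine with the log-concavity tail estimate $\exp(-50n\|\cdot\|_{R_{f_{\mathcal A}}})$ bounding $f_{\mathcal{A}}$ from below on $R_{f_{\mathcal A}}$ and the submultiplicativity properties of functional covering numbers $N(f_1,f_3)\ls N(f_1,f_2)N(f_2,f_3)$ together with $N(\mu\odot f,\mu\odot g)=N(f,g)$. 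Similarly, for $N(f_{\mathcal{A}},\, t\odot g)\ls\exp(\delta_n^2 n/t)$ one uses the dual covering $N(R_{f_{\mathcal A}}, t B_2^n)$-type estimate; here the better exponent $\delta_n\ls c\ln n$ comes, just as in Theorem~\ref{th:regular-M-covering}, from the fact that only the $M^{\ast}$ (respectively $M$) side, not both, enters this particular direction, so only the single logarithmic loss is incurred. The key point is that every functional covering-number manipulation — the Gaussian decomposition lemma, the passage between $R_f$-sublevel sets and the function, the behavior under $\odot$ and under Legendre/polar duality — is already isolated as a structural lemma in the earlier proofs and is insensitive to whether the dual is taken in the Legendre or the polarity sense; only the sandwiching constants change.

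The main obstacle, and the one point requiring genuine care rather than bookkeeping, is verifying that $f_{\mathcal{A}}$ behaves well enough as a log-concave function for the decomposition arguments to apply: unlike $f^{\ast}$, the polarity transform is not linear-covariant in the same way and $f_{\mathcal A}=e^{-\varphi_{\mathcal A}}$ with $\varphi_{\mathcal A}(x)=(50n)^2\varphi^{\circ}(x/n)$ involves the scalings by $n$ and $(50n)^2$ which must be tracked through the covering estimates. Concretely, one must check that on $R_{f_{\mathcal A}}$ the function $f_{\mathcal A}$ is bounded below by $\exp(-Cn\|\cdot\|_{R_{f_{\mathcal A}}})$ for an absolute $C$ — this follows because $f_{\mathcal A}\gr e^{-50n}$ on $R_{f_{\mathcal A}}$ by definition and $\varphi_{\mathcal A}$ is convex with $\varphi_{\mathcal A}(0)=0$, so the standard convexity interpolation gives the pointwise lower bound with the right normalization. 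Once this is in place, the scalings by $n$ cancel against the $n(R_f)^{\circ}$ appearing in the Milman--Rotem inclusion, and the rest of the argument is identical to the proof of Theorem~\ref{th:regular-M-covering}, producing the same exponents $\gamma_n\ls c(\ln n)^2$ and $\delta_n\ls c\ln n$. I would therefore present the proof as: (i) quote the Milman--Rotem inclusion and the regularity of $(R_f)^{\circ}$; (ii) record the pointwise lower bound for $f_{\mathcal A}$ on $R_{f_{\mathcal A}}$; (iii) invoke verbatim the transfer lemmas from the proof of Theorem~\ref{th:regular-M-covering}, with $R_{f^{\ast}}$ replaced by $R_{f_{\mathcal A}}$ and the constant $100n$ replaced by $2n$.
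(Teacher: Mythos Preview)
Your proposal is correct and follows essentially the same route as the paper: verify that $R_{f_{\mathcal A}}\approx n(R_f)^\circ$ via the Milman--Rotem inclusion, transfer the almost $2$-regular covering estimates for $(R_f)^\circ$ (coming from $K_{n+1}(f)$ and Proposition~\ref{prop:M-position}) to $R_{f_{\mathcal A}}$, check $\vol_n(R_{f_{\mathcal A}})^{1/n}\approx 1$ via Blaschke--Santal\'o/Bourgain--Milman, and then invoke Propositions~\ref{prop:fcover-1} and~\ref{prop:fcover-2} verbatim with $f_{\mathcal A}$ in place of $f$. Your ``main obstacle'' is in fact no obstacle at all: Propositions~\ref{prop:fcover-1} and~\ref{prop:fcover-2} are stated for an arbitrary geometric log-concave function and already contain the convexity-based pointwise bounds you describe, so once you observe that $\varphi_{\mathcal A}$ is a geometric convex function (hence $f_{\mathcal A}\in LC_g(\mathbb{R}^n)$) there is nothing further to verify.
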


We also obtain the corresponding analogue of Theorem~\ref{th:isotropic-milman}.

\begin{theorem}\label{th:isotropic-milman-dual}
Let $f:\mathbb{R}^{n}\to[0,\infty)$ be an isotropic geometric log-concave function.
Then,
$$\max\Bigl\{N(f,g),\, N(f_{\mathcal{A}},g),\, N(g,f),\, N(g,f_{\mathcal{A}})\Bigr\} \;\ls\; C^{n},$$
for an absolute constant $C>0$.
\end{theorem}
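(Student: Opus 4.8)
The plan is to deduce Theorem~\ref{th:isotropic-milman-dual} from Theorem~\ref{th:isotropic-milman} by transferring covering estimates from the Legendre dual $f^{\ast}$ to the polarity-based dual $f_{\mathcal{A}}$, exactly in the spirit of the strategy already used to prove Theorem~\ref{th:regular-airy}. Indeed, the bounds on $N(f,g)$ and $N(g,f)$ are literally the same statements as in Theorem~\ref{th:isotropic-milman}, so nothing new is required there; the content is in controlling $N(f_{\mathcal{A}},g)$ and $N(g,f_{\mathcal{A}})$ by $C^n$.

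The key geometric input, recorded in the excerpt, is the sandwich $n(R_f)^{\circ}\subseteq R_{f_{\mathcal{A}}}\subseteq 2n(R_f)^{\circ}$ coming from V.~Milman and Rotem~\cite{VMilman-Rotem-2013}, together with the Fradelizi--Meyer inclusions $50n\,(R_f)^{\circ}\subseteq R_{f^{\ast}}\subseteq 100n\,(R_f)^{\circ}$ used in the proof of Theorem~\ref{th:isotropic-milman}. Chaining these gives that $R_{f_{\mathcal{A}}}$ and $R_{f^{\ast}}$ are within an absolute geometric distance of each other, i.e. there is an absolute constant $a\gr 1$ with $a^{-1}R_{f^{\ast}}\subseteq R_{f_{\mathcal{A}}}\subseteq a\,R_{f^{\ast}}$. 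First I would make this explicit. Second, I would invoke the decomposition machinery already developed for Theorem~\ref{th:regular-M-covering}: the functions $f_{\mathcal{A}}$ and $e^{-(50n)\snorm{\cdot}_{R_{f_{\mathcal{A}}}}}$ are comparable up to the structural losses controlled in that proof (the level set $R_{f_{\mathcal{A}}}$ captures $f_{\mathcal{A}}$ up to the scale $50n$, just as $R_f$ captures $f$), so that functional covering numbers involving $f_{\mathcal{A}}$ reduce to covering numbers of the convex body $R_{f_{\mathcal{A}}}$ by Euclidean balls, and symmetrically on the Gaussian side. Using the submultiplicativity and monotonicity of $N(\cdot,\cdot)$ and the inclusions above,
\begin{equation*}
N(f_{\mathcal{A}},g)\ls N(f_{\mathcal{A}},f^{\ast})\cdot N(f^{\ast},g)\ls C_1^{n}\cdot N(f^{\ast},g),
\end{equation*}
and similarly $N(g,f_{\mathcal{A}})\ls N(g,f^{\ast})\cdot N(f^{\ast},f_{\mathcal{A}})\ls N(g,f^{\ast})\cdot C_2^{n}$, where the factors $C_i^{n}$ quantify the comparison $a^{-1}R_{f^{\ast}}\subseteq R_{f_{\mathcal{A}}}\subseteq a\,R_{f^{\ast}}$ at the functional level (a bounded geometric distance between the two convex bodies, combined with the fact that both functions are essentially $e^{-(50n)\snorm{\cdot}}$ on those bodies, costs only an exponential-in-$n$ factor). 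Since Theorem~\ref{th:isotropic-milman} already gives $N(f^{\ast},g)\ls C^{n}$ and $N(g,f^{\ast})\ls C^{n}$, the desired bound follows with a new absolute constant.

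The main obstacle is the passage from the convex-body comparison of $R_{f_{\mathcal{A}}}$ and $R_{f^{\ast}}$ to a genuine comparison of the functions $f_{\mathcal{A}}$ and $f^{\ast}$ as needed for functional covering numbers: a dilation of a level set does not translate into a simple dilation of a log-concave function, so one must use the two-sided control of both functions by $e^{-(50n)\snorm{\cdot}_{R}}$ on the relevant scales, as in the proof of Theorem~\ref{th:regular-M-covering}, and check that the distortion introduced outside the level set (where $f_{\mathcal{A}}$, being built from the polarity transform, may decay at a different rate than $f^{\ast}$) is absorbed into an $O(1)^n$ factor. Once this comparison lemma is in place — and it is essentially the $t=1$ case of the argument behind Theorem~\ref{th:regular-airy} — the rest is a routine application of the submultiplicativity of functional covering numbers and Theorem~\ref{th:isotropic-milman}.
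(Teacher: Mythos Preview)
Your approach is correct in outline but takes an unnecessary detour. You propose to pass through $f^{\ast}$ via submultiplicativity, which requires establishing the ``comparison lemma'' $N(f_{\mathcal{A}},f^{\ast})\ls C_1^n$ and $N(f^{\ast},f_{\mathcal{A}})\ls C_2^n$ as an intermediate step. The paper avoids this entirely: since $R_{f_{\mathcal{A}}}\approx n(R_f)^{\circ}$ by \eqref{eq:airy-3}, the convex-body covering estimates \eqref{eq:survey-Rf} for $(R_f)^{\circ}$ transfer directly to $R_{f_{\mathcal{A}}}$, and one simply repeats the second half of the proof of Theorem~\ref{th:isotropic-milman} with $f_{\mathcal{A}}$ in place of $f^{\ast}$. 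In other words, both $R_{f^{\ast}}$ and $R_{f_{\mathcal{A}}}$ are compared to the \emph{same} reference body $n(R_f)^{\circ}$, so there is no need to compare them to each other; the machinery of Propositions~\ref{prop:fcover-1} and \ref{prop:fcover-2} (at $t=1$, with the plain $M$-position input \eqref{eq:survey-Rf}) applies to $f_{\mathcal{A}}$ directly. Your comparison lemma is provable along the lines you sketch, but it costs essentially the same work as the direct route and buys nothing extra.

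One caution on your justification: you describe the comparison lemma as ``essentially the $t=1$ case of the argument behind Theorem~\ref{th:regular-airy}''. Taken literally, setting $t=1$ in Theorem~\ref{th:regular-airy} yields only $\exp(\gamma_n^2 n)$ with $\gamma_n\ls c(\ln n)^2$, not $C^n$. What you actually need is that \emph{argument} with the regular covering input replaced by the plain $M$-position bound \eqref{eq:survey-Rf}, exactly as in the proof of Theorem~\ref{th:isotropic-milman}. Once you make that substitution, you may as well apply it to $f_{\mathcal{A}}$ directly rather than to the pair $(f_{\mathcal{A}},f^{\ast})$, and you recover the paper's proof.
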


Together, Theorems~\ref{th:regular-M-covering} and \ref{th:regular-airy} show that, in the isotropic position, 
a log-concave function $f$ and its duals $f^{\ast}$ and $f_{\mathcal{A}}$ behave like well-balanced 
Gaussians at all scales.  
This establishes a functional analogue of Pisier's theorem on the existence of regular $M$-positions for convex bodies, 
and may have further applications in the analysis of log-concave functions.

\smallskip

For background on isotropic convex bodies and log-concave measures and functions, see~\cite{BGVV-book};  
for general information on the local theory of normed spaces, see~\cite{AGA-book, AGA-book-2, Pisier-book}.

\section{Convex bodies and log-concave functions}\label{section:2}

We work in $\mathbb{R}^n$, equipped with the standard inner product $\langle \cdot, \cdot \rangle$.  
The corresponding Euclidean norm is denoted by $|\cdot|$, the Euclidean unit ball by $B_2^n$, and the Euclidean unit sphere by $S^{n-1}$.  
Volume in $\mathbb{R}^n$ is denoted by $\vol_n$, and we write $\omega_n = \vol_n(B_2^n)$ for the volume of the unit ball.
We denote by $\sigma$ the rotationally invariant probability measure on $S^{n-1}$.

\smallskip

Throughout the text, the symbols $c, c', c_1, c_2, \ldots$ denote absolute positive constants whose values may change from line to line.  
Whenever we write $a \approx b$, we mean that there exist absolute constants $c_1, c_2 > 0$ such that $c_1 a \ls b \ls c_2 a$. 
Similarly, for subsets $K,T\subseteq \mathbb{R}^n$, we write $K\approx T$ if
$c_1K\subseteq T\subseteq c_2K$ for some absolute constants $c_1,c_2>0$.

\bigskip 

\textbf{2.1. Convex bodies.} A convex body in $\mathbb{R}^n$ is a compact convex set $K$ with nonempty interior.  
It is called symmetric if $K = -K$, and centered if its barycenter 
$\operatorname{bar}(K) = \frac{1}{\vol_n(K)} \int_K x\,dx$ is at the origin. If $K$ and $T$ are two convex bodies in ${\mathbb R}^n$ that contain
the origin in their interior, their geometric distance $d_{{\rm G}}(K,T)$ is defined by
$$d_{{\rm G}}(K,T)=\inf\{ ab:a,b>0,K\subseteq bT\;\hbox{and}\;T\subseteq aK\}.$$

The radial function of a convex body $K$ with $0 \in \operatorname{int}(K)$ is defined by
$\rho_K(x) = \max\{ t > 0 : t x \in K \}$ for $x \neq 0$,
and the support function of $K$ is given by $h_K(y) = \max\{ \langle x, y \rangle : x \in K \}$, 
$y \in \mathbb{R}^n$.
The volume radius of $K$ is
$$\operatorname{vrad}(K)= r_K=\left( \frac{\vol_n(K)}{\vol_n(B_2^n)} \right)^{1/n}.$$
The polar body of a convex body $K$ with $0 \in \operatorname{int}(K)$ is defined as
$$K^\circ = \{ x \in \mathbb{R}^n : \langle x, y \rangle \ls 1 \text{ for all } y \in K \}.$$

The Blaschke--Santal\'{o} inequality states that if $K$ is a convex body in $\mathbb{R}^n$ such that either 
$\operatorname{bar}(K) = 0$ or $\operatorname{bar}(K^\circ) = 0$, then
$$\vol_n(K)\,\vol_n(K^\circ) \ls \omega_n^2.$$
In the opposite direction, the Bourgain--Milman inequality guarantees that if $K$ is a convex body in $\mathbb{R}^n$ 
with $0 \in \operatorname{int}(K)$, then
$$\vol_n(K)\,\vol_n(K^\circ)\gr c^n \omega_n^2,$$
where $c>0$ is an absolute constant. These classical results can be found, for example, in \cite{AGA-book}.

\bigskip 

\textbf{2.2. Log-concave functions.} A function $f : \mathbb{R}^n \to [0,\infty)$ is called log-concave if it can be written in the form  
$f = e^{-\varphi}$, where $\varphi : \mathbb{R}^n \to (-\infty,\infty]$ is a proper, lower semi-continuous (l.s.c.) convex
function. Properness means that $\mathrm{dom}(\varphi) := \{x \in \mathbb{R}^n : \varphi(x) < \infty\} \neq \varnothing$,
and l.s.c.\ convexity ensures that $f$ is upper semi-continuous and satisfies the classical log-concavity inequality  
$f\big((1-\lambda)x+\lambda y\big) \gr f(x)^{1-\lambda} f(y)^\lambda$ for all $x,y \in \mathbb{R}^n,\; \lambda \in (0,1)$.

We call $f=e^{-\varphi}$ a geometric log-concave function if it also satisfies the normalization
$$f(0)=\|f\|_{\infty}=1,$$
which is equivalent to requiring that the associated convex function $\varphi$ satisfies
$$\varphi : \mathbb{R}^n \to [0,\infty], \qquad \varphi(0)=0,$$
together with properness, convexity, and lower semi-continuity (these are the geometric convex functions).
We denote by $LC_g(\mathbb{R}^n)$ the class of all such geometric log-concave functions.

For any convex body $K\subset\mathbb{R}^n$, the indicator function $\mathds{1}_K$ is a geometric log-concave function.
Indeed, let $\mathds{1}_K^\infty$ be the convex indicator of $K$,  
$$
\mathds{1}_K^\infty(x) = 
    \begin{cases}
        0, & x\in K, \\
        \infty, & x\notin K,
\end{cases}
$$
which is a proper l.s.c.\ convex function with $\mathds{1}_K^\infty(0)=0$ whenever $0\in K$.  
Then $\mathds{1}_K = \exp(-\mathds{1}_K^\infty)$.

\smallskip

For $t>0$ and a log-concave function $f$, the functional homothety is defined by
$$(t \odot f)(x) = f\big(x/t\big),$$
which corresponds to replacing $\varphi$ by $\varphi_t(x)= \varphi(x/t)$.
This transformation respects log-concavity and is a natural functional counterpart of geometric dilation.

\smallskip

Let $\varphi:\mathbb{R}^n\to[0,\infty]$ be a geometric convex function. The Legendre transform of $\varphi$ is
$$\mathcal{L}\varphi(x)= \sup_{y\in\mathbb{R}^n}\{ \langle x,y\rangle - \varphi(y)\}.$$
It is always a convex, l.s.c.\ function, and satisfies the involution property 
$\mathcal{L}(\mathcal{L}\varphi)=\varphi$ if $\varphi$ is proper, l.s.c., convex. 
The Legendre dual of $f=e^{-\varphi}$ is then defined as
$$f^{\ast}(x)=\exp\big(-\mathcal{L}\varphi(x)\big).$$
The fundamental example of a self-dual log-concave function is the Gaussian
$$g(x) = \exp\!\left(-\tfrac{1}{2}|x|^{2}\right),$$
which satisfies  
$$\int_{\mathbb{R}^n} g(x)\,dx = (2\pi)^{n/2}, \qquad g^{\ast}=g.$$
Given two log-concave functions $f=e^{-\varphi}$ and $g=e^{-\psi}$, we define the sup-convolution or 
Asplund product of $f$ and $g$ by
$$(f\star g)(x)= \sup_{y\in\mathbb{R}^n} f(y)\, g(x-y)= \exp\!\big(-(\varphi \square \psi)(x)\big),$$
where the inf-convolution of two convex functions $\varphi$ and $\psi$ is
$$(\varphi \square \psi)(x)=\inf_{y\in\mathbb{R}^n}\big\{\varphi(y)+\psi(x-y)\big\}.$$
In view of the identity $\mathcal{L}(\varphi\square\psi)=\mathcal{L}\varphi +\mathcal{L}\psi$,
the operation $f\star g$ allows us to define the analogue of Minkowski addition for log-concave functions.

\smallskip 

The polar (or $\mathcal{A}$-transform) of $\varphi$ is defined by
$$\varphi^{\circ}(x)=(\mathcal{A}\varphi)(x)=\sup_{y\in\mathbb{R}^n}\frac{\langle x,y\rangle -1}{\varphi(y)}.$$
This transform is a functional analogue of the classical polarity of convex bodies.  
If $\varphi = \mathds{1}_K^\infty$, then $\varphi^\circ = \mathds{1}_{K^\circ}^\infty$ and  
$$e^{-\varphi} = \mathds{1}_K\quad\Longrightarrow\quad e^{-\varphi^\circ} = \mathds{1}_{K^\circ}.$$
The polar transform plays an essential role in functional 
analogues of the Blaschke--Santal\'{o} inequality. We refer to \cite[Chapter~9]{AGA-book-2} for 
more information and references.

We shall work with the scaled version  
$$\varphi_{\mathcal{A}}(x)=(50n)^2\, \varphi^{\circ}(x/n)$$
and define the polar log-concave function of $f=\exp(-\varphi)$ by
$$f_{\mathcal{A}}(x)=\exp\big(-\varphi_{\mathcal{A}}(x)\big).$$

We would like to mention here that Gilboa, Segal and Slomka \cite{Gilboa-Segal-Slomka-2025} 
have also used some scaled version of the polarity transform to study the Mahler product
of geometric log-concave functions. More precisely, they showed that if $q\approx n^2$ then
$$\left(\int_{\mathbb{R}^n}e^{-\varphi(x)}dx\right)^{1/n}\left(\int_{\mathbb{R}^n}e^{-q\mathcal{A}\varphi(x)}dx\right)^{1/n}
\approx\frac{1}{n}$$
for every centered geometric log-concave function $f=e^{-\varphi}$ with finite positive integral.
They also obtained an analogous result for the $\mathcal{J}$-transform, defined as 
$\mathcal{J}=\mathcal{L}\mathcal{A}=\mathcal{A}\mathcal{L}$.

\bigskip 

\textbf{2.3. Isotropic geometric log-concave functions.} Let $f:\mathbb{R}^n\to [0,\infty)$ be a log-concave function 
with finite positive integral. The barycenter of $f$ is defined by
\[
\operatorname{bar}(f)
= \frac{\int_{\mathbb{R}^n} x\, f(x)\, dx}{\int_{\mathbb{R}^n} f(x)\, dx},
\]
and its isotropic constant is the affine-invariant quantity
\begin{equation}\label{eq:definition-isotropic}
L_f:= \left( \frac{\|f\|_{\infty}}{\int_{\mathbb{R}^n} f(x)\, dx} \right)^{1/n}
\det(\operatorname{Cov}(f))^{1/(2n)},
\end{equation}
where $\operatorname{Cov}(f)$ is the covariance matrix of $f$ with entries
\begin{equation*}
\textrm{Cov}(f)_{i,j}:=\frac{\int_{\mathbb{R}^n}x_ix_j f(x)\,dx}{\int_{\mathbb{R}^n} f(x)\,dx}-\frac{\int_{\mathbb{R}^n}x_i f(x)\,dx}{\int_{\mathbb{R}^n} f(x)\,dx}\frac{\int_{\mathbb{R}^n}x_j f(x)\,dx}{\int_{\mathbb{R}^n} f(x)\,dx}.
\end{equation*} In this article, a log-concave function $f$ is called isotropic if
$$\operatorname{bar}(f)=0,\quad \int_{\mathbb{R}^n}f(x)\,dx=1\quad \text{and} \quad \operatorname{Cov}(f)=\lambda_f^2 I_n$$
for some $\lambda_f>0$.  We  say that $f$ is an isotropic geometric log-concave function if it also satisfies
$f(0)=\|f\|_{\infty}=1$.

A convex body $K$ in $\mathbb{R}^n$ is called isotropic if $\vol_n(K)=1$, ${\rm bar}(K)=0$, and
${\rm Cov}(\mu_K)=L_{\mu_K}^2I_n$, where $\mu_K$ is the uniform measure on $K$. Note that $K$ is isotropic 
if and only if its indicator function $\mathds{1}_K$ is an isotropic geometric log-concave function.

\smallskip

It is straightforward to check that any centered log-concave function $f$ admits an invertible linear map  
$T\in GL_n$ such that the covariance matrix of $f_1 := f\circ T$ is a multiple $\lambda_{f_1}^2 I_n$ of the identity; moreover, $L_{f_1}=L_f$.  
Dividing $f_1$ by $\|f_1\|_{\infty}$ yields a centered log-concave function $f_2$ with
$$\|f_2\|_{\infty}=1,\quad \int_{\mathbb{R}^n} f_2(x)\,dx = a^n,\quad a := \lambda_{f_1}/L_f.$$
Define $f_3(x) := f_2(a x)$. Then
$$\|f_3\|_{\infty}=1, \qquad \int_{\mathbb{R}^n} f_3(x)\, dx = 1, \qquad\operatorname{Cov}(f_3) = L_f^2 I_n.$$
A log-concave function satisfying these properties is called a normalized isotropic log-concave function.  
Thus every centered log-concave $f$ with finite positive integral admits a normalized isotropic position  
$\tilde f = f\circ T$ for some $T\in GL_n$. If we make the additional assumption that $f(0)=\|f\|_{\infty}$ then
we have that $\tilde{f}\in LC_g(\mathbb{R}^n)$, i.e. $\tilde{f}$ is an isotropic geometric log-concave function.

\smallskip

Bourgain's slicing problem \cite{Bourgain-1986} asks if there exists an absolute constant $C>0$ such that
\begin{equation}\label{eq:conjecture}L_n:=\max\{ L_{\mu_K}:K\ \hbox{is an isotropic convex body in}\ \mathbb{R}^n\}\ls C.\end{equation}
K.~Ball~\cite{Ball-1988} proved that for every $n$,
$$\sup_f L_f \;\ls \; C_1L_n,$$
where the supremum is taken over all isotropic log-concave functions $f$ on $\mathbb{R}^n$.
Bourgain~\cite{Bourgain-1991} showed that $L_n \ls c\, n^{1/4}\ln n$, improved by Klartag~\cite{Klartag-2006} to $L_n \ls c\, n^{1/4}$.  
These were the best known bounds until 2020. In a breakthrough, Chen~\cite{C} proved that for every $\epsilon>0$,
$$L_n \ls n^{\epsilon}\qquad \text{for all sufficiently large $n$}.$$
This initiated a series of developments culminating in the complete resolution of Bourgain's problem by Klartag and 
Lehec~\cite{KL}, who proved that $L_n\ls C$, building on an important contribution of Guan~\cite{Guan}. 
Shortly thereafter, Bizeul~\cite{Bizeul-2025} provided an alternative proof.

\bigskip 

\textbf{2.4. Covering numbers of isotropic convex bodies.} Let $K$ be a convex body in $\mathbb{R}^n$ with $0 \in \operatorname{int}(K)$.  
Define its Minkowski functional $\|x\|_K := \inf \{ t>0 : x \in tK \}$
and its support function $h_K(x) := \max \{ \langle x,y\rangle : y \in K \}$. Set
$$M(K) := \int_{S^{n-1}} \|x\|_K\, d\sigma(x), \qquad
M^{\ast}(K) := \int_{S^{n-1}} h_K(x)\, d\sigma(x),$$
where $\sigma$ is the rotationally invariant probability measure on $S^{n-1}$.

When $K$ is symmetric, the classical Sudakov and dual Sudakov inequalities~\cite[Chapter~4]{AGA-book} provide upper bounds 
on covering numbers in terms of $M(K)$ and $M^*(K)$:
\begin{equation}\label{eq:sudakov}N(K,\, t B_2^n)\;\ls\; \exp\!\left( c\, n \frac{M^{*}(K)^2}{t^2} \right),
\qquad
N(B_2^n,\, tK)\;\ls\; \exp\!\left( c\, n \frac{M(K)^2}{t^2} \right),\end{equation}
for every $t>0$, where $c>0$ is an absolute constant.  

E.~Milman~\cite{EMilman-2014} proved that if $K$ is isotropic then
\begin{equation}\label{eq:EM}
M^{\ast}(K)\;\ls\; C \sqrt{n}\, (\ln n)^2\, L_K\;\ls\; c_1 \sqrt{n}\, (\ln n)^2,
\end{equation}
the second inequality following from the boundedness of $L_n$.  
This dependence on $n$ is optimal up to logarithmic factors.

The dual estimate was recently obtained by Bizeul and Klartag~\cite{Bizeul-Klartag-2025}: if $K$ is isotropic, then
\begin{equation}\label{eq:BK}
M(K) \;\ls\; c_2 \frac{\log n}{\sqrt{n}}.
\end{equation}
Let $r_K$ denote the radius of the Euclidean ball with the same volume as $K$, i.e. 
$\vol_n(K)=\vol_n(r_K B_2^n)$. 
For isotropic bodies we have $\vol_n(K)=1$ and hence this radius depends only on the dimension:
$r_K=r_n=\omega_n^{-1/n}$, and $r_n \approx \sqrt{n}$. 

Combining the bounds \eqref{eq:EM} and \eqref{eq:BK} with \eqref{eq:sudakov}, the second named author~\cite{Tziotziou-2025} 
showed that every isotropic convex body is essentially in a $2$-regular $M$-position.  

\begin{proposition}\label{prop:M-position}
Let $K$ be an isotropic convex body in $\mathbb{R}^n$.  
Then for every $t\gr 1$,
\begin{align}
\max\{ N(K,\, t r_n B_2^n),\; N(B_2^n,\, t r_n K^{\circ}) \}
&\;\ls\;
\exp\!\left( \frac{\gamma_n^2\, n}{t^2} \right),
\label{eq:M-position-a}
\\[0.2cm]
\max\{ N(r_n B_2^n,\, t K),\; N(r_n K^{\circ},\, t B_2^n) \}
&\;\ls\;
\exp\!\left( \frac{\delta_n^2\, n}{t^2} \right),
\label{eq:M-position-b}
\end{align}
where $\gamma_n \ls c_1 (\ln n)^2$ and $\delta_n \ls c_2 \ln n$.
\end{proposition}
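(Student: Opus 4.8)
The plan is to deduce Proposition~\ref{prop:M-position} by feeding the sharp $M$- and $M^{\ast}$-estimates \eqref{eq:EM}, \eqref{eq:BK} into the Sudakov and dual Sudakov inequalities \eqref{eq:sudakov}, after normalizing by the volume radius $r_n$. First I would record that for an isotropic convex body $K$ we have $\vol_n(K)=1$, so $r_K=r_n=\omega_n^{-1/n}\simeq\sqrt n$, and that the quantities $M(K)$, $M^{\ast}(K)$ are homogeneous of degree $-1$ and $+1$ respectively: $M(tK)=M(K)/t$ and $M^{\ast}(tK)=tM^{\ast}(K)$. The point of the rescaling $K\mapsto r_n^{-1}K$ (equivalently, covering by $tr_nB_2^n$ rather than $tB_2^n$) is exactly to turn the estimates \eqref{eq:EM} and \eqref{eq:BK}, which carry a $\sqrt n$ or $1/\sqrt n$ factor, into the clean exponents $\gamma_n^2 n/t^2$ and $\delta_n^2 n/t^2$.

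The four covering numbers split into two pairs. For \eqref{eq:M-position-a}: applying the first inequality in \eqref{eq:sudakov} with $tr_n$ in place of $t$ gives $N(K,tr_nB_2^n)\ls\exp(cnM^{\ast}(K)^2/(t^2r_n^2))$; since $M^{\ast}(K)\ls C\sqrt n(\ln n)^2$ and $r_n^2\simeq n$, the ratio $M^{\ast}(K)^2/r_n^2\ls C'(\ln n)^4$, so the exponent is $\ls\gamma_n^2 n/t^2$ with $\gamma_n\ls c_1(\ln n)^2$. For the term $N(B_2^n,tr_nK^{\circ})$ I would instead use the \emph{dual} Sudakov bound applied to the body $K^{\circ}$: $N(B_2^n,tr_nK^{\circ})\ls\exp(cnM(K^{\circ})^2/(t^2r_n^2))$, and observe that $M(K^{\circ})=M^{\ast}(K)$, which again is $\ls C\sqrt n(\ln n)^2$, giving the same exponent. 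Symmetrically, for \eqref{eq:M-position-b} I would use the dual Sudakov bound to get $N(r_nB_2^n,tK)=N(B_2^n,(t/r_n)K)\ls\exp(cnM(K)^2 r_n^2/t^2)$; here $M(K)\ls c_2\log n/\sqrt n$ and $r_n^2\simeq n$ give $M(K)^2 r_n^2\ls c_2'(\log n)^2$, hence the exponent $\ls\delta_n^2 n/t^2$ with $\delta_n\ls c_2\ln n$. The remaining term $N(r_nK^{\circ},tB_2^n)=N(K^{\circ},(t/r_n)B_2^n)\ls\exp(cnM^{\ast}(K^{\circ})^2 r_n^2/t^2)$ is handled using $M^{\ast}(K^{\circ})=M(K)$, again yielding the $\delta_n$ bound. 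Throughout, the hypothesis $t\gr 1$ is only needed so that the statement is nonvacuous (for $t$ large the bound trivializes); the Sudakov inequalities themselves hold for all $t>0$.

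The one genuine subtlety is that the Sudakov and dual Sudakov inequalities as quoted in \eqref{eq:sudakov} require $K$ to be \emph{symmetric}, whereas an isotropic body is only assumed centered. I would address this the standard way: pass to the symmetric body $K-K$ (or, equivalently, use the centered versions of Sudakov's inequality, valid up to absolute constants for centered bodies — see \cite[Chapter~4]{AGA-book}). Since $K$ is centered one has $K\subseteq K-K\subseteq 2\,\mathrm{conv}(K\cup(-K))$ and the relevant mean widths change only by absolute factors, which are absorbed into $\gamma_n,\delta_n$; the same remark applies to $K^{\circ}$ via $(K-K)^{\circ}\subseteq K^{\circ}$. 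This symmetrization step, together with keeping careful track of which of the two Sudakov inequalities (primal vs.\ dual) applies to which of the four covering numbers and using the identities $M(K^{\circ})=M^{\ast}(K)$ and $M^{\ast}(K^{\circ})=M(K)$, is the only place where care is needed; the rest is the bookkeeping of homogeneity and the estimate $r_n\simeq\sqrt n$.
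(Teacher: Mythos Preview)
Your proposal is correct and follows exactly the approach the paper indicates: the paper does not give a detailed proof of Proposition~\ref{prop:M-position} but simply says it follows by ``combining the bounds \eqref{eq:EM} and \eqref{eq:BK} with \eqref{eq:sudakov}'' and cites \cite{Tziotziou-2025}. You carry out precisely this combination, correctly matching each of the four covering numbers with the appropriate Sudakov or dual Sudakov inequality via the identities $M(K^{\circ})=M^{\ast}(K)$ and $M^{\ast}(K^{\circ})=M(K)$, and you even address the symmetry issue (the paper states \eqref{eq:sudakov} only for symmetric $K$ but applies the proposition to merely centered isotropic bodies), which the paper leaves implicit.
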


These estimates provide a quantitative control of covering numbers for isotropic convex bodies and their polars, which is crucial in applications.  
In particular, they will play an essential role in the proof of Theorem~\ref{th:regular-M-covering}.

\bigskip 

\textbf{2.5. Functional covering numbers.}
Recall that for any pair of functions $f,g \in LC_g(\mathbb{R}^n)$, the covering number of $f$ by $g$ is defined by
$$N(f,g)= \inf\{\mu(\mathbb{R}^n) : \mu \ast g \gr f\},$$
where the infimum is taken over all non-negative Borel measures $\mu$ on $\mathbb{R}^n$ satisfying
$$\int g(x-t)\, d\mu(t) \gr f(x),\qquad x \in \mathbb{R}^n.$$
Intuitively, $N(f,g)$ measures the minimal ``weight" of translates of $g$ needed to dominate $f$.  
This generalizes the classical notion of covering numbers for convex bodies: it is useful to note that 
if $K$ and $T$ are convex bodies in $\mathbb{R}^n$, then
\begin{equation} \label{eq:indicator-covering}
N(\mathds{1}_K,\mathds{1}_T) \ls N(K,T).
\end{equation}
For the proof of \eqref{eq:indicator-covering}, let $N = N(K,T)$ and choose points
$x_1,\ldots,x_N \in \mathbb{R}^n$ such that $K \subseteq \bigcup_{j=1}^N (x_j + T)$.
Let $\mu$ be the counting measure on $\{x_1,\ldots ,x_N\}$. Then
$$\int_{\mathbb{R}^n}\mathds{1}_T(x-t)\, d\mu(t)= \sum_{j=1}^N \mathds{1}_T(x-x_j)
= \sum_{j=1}^N \mathds{1}_{x_j+T}(x)\gr \mathds{1}_{\cup_{j=1}^N (x_j+T)}(x)\gr \mathds{1}_K(x),$$
which shows that $\mu \ast \mathds{1}_T \gr \mathds{1}_K$, and $\mu(\mathbb{R}^n)=N$.
Thus $N(\mathds{1}_K,\mathds{1}_T)\ls N(K,T)$.

\smallskip

Functional covering numbers satisfy several properties analogous to classical covering numbers.  
In the next lemma we collect the ones that will be used later in the proof of Theorem~\ref{th:regular-M-covering}.
A detailed proof of these properties can be found in \cite[Section~2]{Artstein-Slomka-2021}, 
and they hold more generally for non-negative measurable functions.

\begin{lemma}\label{lem:properties}
Let $f,g,h,f_i,g_i \in LC_g(\mathbb{R}^n)$. Then, for any $a,b>0$ and $T\in GL_n$,
\begin{enumerate}
\item[{\rm (i)}] $N(af,bg)=\dfrac{a}{b}N(f,g)$.
\item[{\rm (ii)}] $N(f\circ T,\, g\circ T)=N(f,g)$.
\item[{\rm (iii)}] $N(f_1+f_2,g)\ls N(f_1,g)+N(f_2,g)$.
\item[{\rm (iv)}] If $f_1 \ls f_2$ and $g_1 \gr g_2$, then $N(f_1,g_1)\ls N(f_2,g_2)$.
\item[{\rm (v)}] $N(f,g)\ls N(f,h)\,N(h,g)$.
\end{enumerate}
\end{lemma}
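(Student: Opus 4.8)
The plan is to prove Lemma~\ref{lem:properties} by verifying each of the five stated properties directly from the definition of the functional covering number. Throughout I will repeatedly use the fact that $N(f,g)$ is an infimum over non-negative Borel measures $\mu$ with $\mu\ast g\gr f$, and that the class of such measures is stable under the natural operations (scaling, push-forward, summing).

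\medskip

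\textbf{(i).} If $\mu\ast g\gr f$ then $(\frac{a}{b}\mu)\ast(bg)=a(\mu\ast g)\gr af$, and $\frac{a}{b}\mu$ has total mass $\frac{a}{b}\mu(\mathbb{R}^n)$; taking the infimum gives $N(af,bg)\ls\frac{a}{b}N(f,g)$. The reverse inequality follows by symmetry (apply the same argument to $af,bg$ with the roles of the scalars $\frac1a,\frac1b$), so equality holds.

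\textbf{(ii).} If $\mu\ast g\gr f$, let $\nu=(T^{-1})_{\#}\mu$ be the push-forward of $\mu$ under $T^{-1}$. A change of variables $t=Ts$ in $\int g(T^{-1}x-s)\,d\nu(s)$ shows $\nu\ast(g\circ T)\gr f\circ T$, and $\nu(\mathbb{R}^n)=\mu(\mathbb{R}^n)$ since push-forward preserves total mass; hence $N(f\circ T,g\circ T)\ls N(f,g)$, and applying this with $T^{-1}$ gives the reverse.

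\textbf{(iii).} Pick near-optimal measures $\mu_1,\mu_2$ with $\mu_i\ast g\gr f_i$. Then $(\mu_1+\mu_2)\ast g=\mu_1\ast g+\mu_2\ast g\gr f_1+f_2$ and $(\mu_1+\mu_2)(\mathbb{R}^n)=\mu_1(\mathbb{R}^n)+\mu_2(\mathbb{R}^n)$; taking infima over $\mu_1,\mu_2$ separately yields the subadditivity.

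\textbf{(iv).} If $\mu\ast g_2\gr f_2$ then from $g_1\gr g_2$ we get $\mu\ast g_1\gr\mu\ast g_2\gr f_2\gr f_1$, so every measure admissible for $(f_2,g_2)$ is admissible for $(f_1,g_1)$; the infimum over the larger admissible set is no larger, giving $N(f_1,g_1)\ls N(f_2,g_2)$.

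\textbf{(v).} This is the submultiplicativity and is the one step requiring a genuine ``composition'' argument. Take $\mu$ with $\mu\ast h\gr f$ and $\nu$ with $\nu\ast g\gr h$. The plan is to form the convolution measure $\mu\ast\nu$ (the push-forward of $\mu\otimes\nu$ under addition) and check $(\mu\ast\nu)\ast g=\mu\ast(\nu\ast g)\gr\mu\ast h\gr f$, using associativity of convolution together with the monotonicity $\mu\ast h_1\gr\mu\ast h_2$ when $h_1\gr h_2$ (which holds since $\mu\gr0$). Since $(\mu\ast\nu)(\mathbb{R}^n)=\mu(\mathbb{R}^n)\,\nu(\mathbb{R}^n)$, taking the infimum over $\mu$ and $\nu$ separately gives $N(f,g)\ls N(f,h)\,N(h,g)$.

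\medskip

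I expect the only mild subtlety to be in (v): one must make sure the convolution $\mu\ast\nu$ of two non-negative Borel measures is again a well-defined non-negative Borel measure and that Fubini applies to justify the associativity identity $(\mu\ast\nu)\ast g=\mu\ast(\nu\ast g)$ pointwise. Since $g\gr0$ is measurable this is a routine application of Tonelli's theorem, and for the main results of the paper one may in any case restrict to measures supported on finite or at least $\sigma$-finite sets, where no measurability issue arises. All other parts are immediate from the definition and, as noted in the statement, do not use log-concavity — they hold for arbitrary non-negative measurable $f,g,h$. A complete proof appears in \cite[Section~2]{Artstein-Slomka-2021}.
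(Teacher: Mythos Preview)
Your proposal is correct: each of the five parts follows directly from the definition of $N(f,g)$ by the natural operations you describe, and your handling of the only nontrivial point (associativity in (v) via Tonelli) is fine. The paper itself does not give a proof of this lemma but simply refers the reader to \cite[Section~2]{Artstein-Slomka-2021}, which is exactly the reference you cite, so your sketch is entirely consistent with---and in fact more detailed than---what the paper provides.
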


Artstein-Avidan and Slomka \cite{Artstein-Slomka-2021} also defined the separation number of $f$ by $g$:
$$M(f,g)= \sup\Big\{\int f\, d\rho : \rho\ast g \ls 1\Big\},$$
where the supremum is over all non-negative Borel measures $\rho$ on $\mathbb{R}^n$ that satisfy
$$\int g(x-t)\, d\rho(t)\ls 1\qquad \text{for all } x\in\mathbb{R}^n.$$
Any such measure $\rho$ is called a separation measure of $g$.
Recall that the separation number $M(K,T)$ of two convex bodies $K$ and $T$ in
$\mathbb{R}^n$ is the maximal cardinality of a $T$-separated subset of $K$, i.e.
$$M(K,T)= \max\Big\{ M\in\mathbb{N} :\exists\, x_1,\ldots,x_M \in K
\text{ such that }(x_i+T)\cap(x_j+T)=\varnothing\ \forall\, i\neq j\Big\}.$$
It is useful to note that
\begin{equation}\label{eq:indicator-separation}
M(K,T)\ls M(\mathds{1}_K,\mathds{1}_T).
\end{equation}
For the proof of \eqref{eq:indicator-separation}, let $M=M(K,T)$ and choose
$x_1,\ldots ,x_M\in K$ such that $(x_i+T)\cap(x_j+T)=\varnothing$ for all $1\ls i\neq j\ls M$.
Let $\mu$ be the counting measure on $\{x_1,\ldots,x_M\}$. Then
$$\int_{\mathbb{R}^n}\mathds{1}_T(x-t)\, d\mu(t)= \sum_{j=1}^M \mathds{1}_{x_j+T}(x)
= \mathds{1}_{\cup_{j=1}^M (x_j+T)}(x)\ls 1,$$
which shows that $\mu\ast\mathds{1}_T\ls 1$, and $\mu(\mathbb{R}^n)=M$.
Thus $M(K,T)\ls M(\mathds{1}_K,\mathds{1}_T)$.

\smallskip

A remarkable result of Artstein-Avidan and Slomka \cite{Artstein-Slomka-2021} shows that for log-concave functions, 
the notions of covering and separation essentially coincide (up to reflection):

\begin{theorem}[Artstein--Slomka]
Let $f, g\in LC_g(\mathbb{R}^n)$. Then, $M(f,\overline{g}) = N(f,g)$, where $\overline{g}(x)=g(-x)$.
\end{theorem}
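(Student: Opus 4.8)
\medskip
\noindent\textbf{Proof strategy.}
The plan is to read the identity $M(f,\overline{g})=N(f,g)$ as an instance of infinite-dimensional linear programming duality and to show there is no duality gap: $N(f,g)=\inf\{\mu(\mathbb R^n):\mu\gr 0,\ \mu\ast g\gr f\}$ is a linear minimization over the cone of non-negative Borel measures subject to the constraints $(\mu\ast g)(x)\gr f(x)$, $x\in\mathbb R^n$, while $M(f,\overline{g})=\sup\{\int f\,d\rho:\rho\gr 0,\ \rho\ast\overline{g}\ls 1\}$ is exactly its Lagrangian dual. (The \emph{exact} identity here, in contrast with the merely two-sided packing/covering estimates available for convex bodies, reflects the measure-valued, ``fractional'', nature of these numbers.) First I would record weak duality: if $\mu\gr 0$ satisfies $\mu\ast g\gr f$ and $\rho\gr 0$ satisfies $\rho\ast\overline{g}\ls 1$, then every integrand below is non-negative, so Tonelli's theorem gives
$$\int f\,d\rho\ \ls\ \int(\mu\ast g)\,d\rho=\iint g(x-t)\,d\mu(t)\,d\rho(x)=\int(\rho\ast\overline{g})\,d\mu\ \ls\ \mu(\mathbb R^n);$$
taking a supremum over $\rho$ and an infimum over $\mu$ yields $M(f,\overline{g})\ls N(f,g)$, which in particular settles the case $M(f,\overline{g})=\infty$.

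\medskip
For the reverse inequality I would prove strong duality via a Hahn--Banach separation argument. Put $\lambda:=M(f,\overline{g})$, assume $\lambda<\infty$, and consider the convex, downward-closed set
$$\mathcal D=\{\,h:\ h\ls\mu\ast g\ \text{for some}\ \mu\gr 0\ \text{with}\ \mu(\mathbb R^n)\ls\lambda\,\},$$
so that $f\in\mathcal D$ is precisely the bound $N(f,g)\ls\lambda$ that we want. Suppose, for contradiction, that $f\notin\overline{\mathcal D}$. Strictly separating $f$ from the closed convex set $\overline{\mathcal D}$ produces a continuous linear functional, i.e.\ a (signed) Borel measure $\rho$, with $\int f\,d\rho>\sup_{h\in\mathcal D}\int h\,d\rho$; the downward-closedness of $\mathcal D$ forces $\rho\gr 0$, and since $\sup_{h\ls\mu\ast g}\int h\,d\rho=\int(\mu\ast g)\,d\rho$ for $\rho\gr 0$, Tonelli gives
$$\sup_{h\in\mathcal D}\int h\,d\rho=\sup_{\mu\gr 0,\ \mu(\mathbb R^n)\ls\lambda}\int(\rho\ast\overline{g})\,d\mu=\lambda\,\|\rho\ast\overline{g}\|_{\infty},$$
the last supremum being approached by point masses of total weight $\lambda$ placed near a near-maximiser of $\rho\ast\overline{g}$. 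Hence $\int f\,d\rho>\lambda\,\|\rho\ast\overline{g}\|_{\infty}$; since $\rho\ast\overline{g}\equiv 0$ would force $\rho=0$ (as $g>0$ near the origin) while $\lambda\gr 0$, I may rescale $\rho$ to a separation measure $\rho'$ with $\rho'\ast\overline{g}\ls 1$ and $\int f\,d\rho'>\lambda$, contradicting the definition of $\lambda=M(f,\overline{g})$. Therefore $f\in\overline{\mathcal D}$, and it remains to upgrade this to $f\in\mathcal D$.

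\medskip
The step I expect to be the main obstacle is precisely this upgrade, together with the choice of a workable topological framework for the separation: one needs a locally convex space of functions in which (i) $\mathcal D$ is \emph{closed} (so that $f\in\overline{\mathcal D}$ really gives $N(f,g)\ls\lambda$) and (ii) continuous functionals are represented by Borel measures. Both are delicate because $g$ is merely upper semi-continuous, so the convolutions $\mu\ast g$ are only upper semi-continuous rather than continuous, and because norm-bounded sets of measures on $\mathbb R^n$ fail to be weak-$\ast$ compact (mass can escape to infinity). My plan to resolve this is a two-step approximation. First truncate: replace $f$ by $f\,\mathds{1}_{RB_2^n}$ and confine the competing measures to a large Euclidean ball; on a compact set, norm-bounded sets of non-negative measures are weak-$\ast$ compact by Banach--Alaoglu, and integration of a bounded upper semi-continuous function is weak-$\ast$ upper semi-continuous in the measure, which is exactly what forces the truncated analogue of $\mathcal D$ to be closed, so that the separation argument above gives $N_R=M_R$ for the truncated problem. (Alternatively, partition $RB_2^n$ into small cubes, replace $f$ and $g$ by suitable step functions, pass to a genuine finite-dimensional linear program where strong duality is classical, and then refine the partition.) Secondly, let $R\to\infty$: using the scaling and monotonicity rules of Lemma~\ref{lem:properties} together with a routine truncation estimate for log-concave functions, one checks that a near-optimal cover of $f\,\mathds{1}_{RB_2^n}$ extends to a feasible measure for $f$ after adding a correction of arbitrarily small total mass, whence $N_R\to N(f,g)$, and similarly $M_R\to M(f,\overline{g})$; combined with $N_R=M_R$ this gives $N(f,g)=M(f,\overline{g})$.
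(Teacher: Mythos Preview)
The paper does not contain a proof of this theorem; it is quoted as a result of Artstein-Avidan and Slomka from \cite{Artstein-Slomka-2021} and used as a black box, so there is no in-paper argument to compare your proposal against.

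On the merits of your outline: the LP-duality reading is the natural one, and your weak-duality computation via Tonelli is correct and complete. For the reverse inequality, the separation-plus-truncation scheme is a reasonable plan, but the two steps you flag as ``the main obstacle'' are exactly where the substance lies, and your sketch does not yet discharge them. First, in the truncated problem you need the set $\mathcal D_R=\{h\ls\mu\ast g:\mu\gr 0,\ \mu(\mathbb R^n)\ls\lambda,\ \mathrm{supp}(\mu)\subset RB_2^n\}$ to be closed in a topology whose dual is measures; since $g$ is only upper semi-continuous, $\mu\mapsto(\mu\ast g)(x)$ is only weak-$\ast$ upper semi-continuous, which gives closedness of $\{\mu:\mu\ast g\gr f\}$ but not directly of the downward-closed $\mathcal D_R$ in a function topology---you need to say precisely which space you separate in and why $\mathcal D_R$ is closed there. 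Second, the limit $N_R\to N(f,g)$ requires that near-optimal covering measures for $f$ may be taken with support in a fixed ball; this is where log-concavity (e.g.\ the decay estimate \eqref{eq:A-B}) must actually be used, and ``routine truncation estimate'' is doing real work that should be written out. Until those two points are made precise, the proposal is a plausible roadmap rather than a proof.
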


In the same work, Artstein-Avidan and Slomka obtained the following general bounds. If   
$f,g$ are geometric log-concave functions, then
$$\frac{\int f^{2}(x)\,dx}{\|f\ast \overline{g}\|_{\infty}}
\;\ls\;N(f,g)\;\ls\;2^{n}\frac{\int f^{2}(x)\,dx}{\|f\ast g\|_{\infty}}.$$
If, in addition, $f$ and $g$ are even functions, then
$$\frac{\int f^{2}(x)\,dx}{\int f(x)g(x)\,dx}\;\ls\;N(f,g)\;\ls\;
2^{n}\frac{\int f^{2}(x)\,dx}{\int f(x)g(x)\,dx}.$$
Moreover, for every $p>1$,
$$\frac{\int f(x)\,dx}{\int g(x)\,dx}\;\ls\;N(f,g)\;\ls\;
\frac{\int (f\star \overline{g}^{\,p-1})(x)\,dx}{\int \overline{g}^{\,p}(x)\,dx}.$$
Using these inequalities, Artstein-Avidan and Slomka showed in \cite{Artstein-Slomka-2021} 
that if $f,g\in LC_g(\mathbb{R}^n)$ are centered, then
$$C^{-n}N(g^{\ast},f^{\ast})\ls N(f,g)\ls C^nN(g^{\ast},f^{\ast}).$$
where $C>0$ is an absolute constant.  
Later, Gilboa, Segal, and Slomka proved in \cite{Gilboa-Segal-Slomka-2025} that if 
$q\approx n^2$ and $\varphi,\psi$ are convex geometric functions such that either 
${\rm bar}(\varphi)=0$ or ${\rm bar}(\mathcal{A}\varphi)=0$, and likewise either 
${\rm bar}(\psi)=0$ or ${\rm bar}(\mathcal{A}\psi)=0$, then
$$C^{-n}N(e^{-q\mathcal{A}\psi},e^{-q\mathcal{A}\phi})\ls N(e^{-\varphi},e^{-\psi})\ls C^nN(e^{-q\mathcal{A}\psi},e^{-q\mathcal{A}\phi}).$$
These results may be viewed as functional analogues of the well-known inequality
$$C^{-n}N(T^{\circ},K^{\circ})\ls N(K,T)\ls C^nN(T^{\circ},K^{\circ})$$
due to K\"{o}nig and Milman (see \cite[Theorem~8.2.3]{AGA-book}), which holds for every pair 
of symmetric convex bodies $K$ and $T$ in $\mathbb{R}^n$.

\section{Regular functional covering numbers}\label{section:3}

In this section we prove that every centered geometric log-concave function admits a regular
covering $M$-position. Our approach relies on the existence of an almost $2$-regular $M$-position
for convex bodies (Proposition~\ref{prop:M-position}). In fact, in Theorem~\ref{th:regular-M-covering}
we show that every isotropic geometric log-concave function is in an almost $1$-regular $M$-position.

Let $f:\mathbb{R}^n \to [0,\infty)$ be a centered log-concave function with $f(0)>0$.  
We associate to $f$ two classical families of convex bodies, denoted by $\{R_t(f)\}_{t>0}$ and
$\{K_t(f)\}_{t>0}$. First, for every $t>0$, define
$$R_t(f)=\{x\in\mathbb{R}^n:\, f(x)\gr e^{-t}f(0)\}.$$
Since $f$ is log-concave, the sets $R_t(f)$ are convex, and clearly $0\in\mathrm{int}(R_t(f))$.
To show that $R_t(f)$ is bounded, recall that every log-concave function with finite positive
integral satisfies (see~\cite[Lemma~2.2.1]{BGVV-book}) the estimate
\begin{equation}\label{eq:A-B}
f(x)\ls A e^{-B|x|}
\qquad\text{for all } x\in\mathbb{R}^n,
\end{equation}
for some constants $A,B>0$.  
Thus, if $x\in R_t(f)$ then
$$|x|\ls \frac{1}{B}\bigl(\ln(A/f(0))+t\bigr).$$

The second family of convex bodies $K_t(f)$ was introduced by K.~Ball, who also proved their convexity
in~\cite{Ball-1988}. For every $t>0$, define
$$K_t(f)=\Bigl\{x\in\mathbb{R}^n:\, \int_0^\infty r^{t-1}f(rx)\,dr \gr \frac{f(0)}{t} \Bigr\}.$$
The radial function of $K_t(f)$ is given by
\begin{equation}\label{eq:radial-Kp}
\varrho_{K_t(f)}(x)=\left( \frac{1}{f(0)}\int_{0}^{\infty} t\, r^{t-1} f(rx)\,dr \right)^{1/t},\qquad x\neq 0.
\end{equation}
For $0<t\ls s$ one has the inclusions (see~\cite[Proposition~2.5.7]{BGVV-book})
\begin{equation}\label{eq:inclusions-Kp}
\frac{\Gamma(t+1)^{1/t}}{\Gamma(s+1)^{1/s}}\,K_s(f)\subseteq K_t(f)\subseteq
\left(\frac{\|f\|_{\infty}}{f(0)}\right)^{\frac{1}{t}-\frac{1}{s}} K_s(f).
\end{equation}
Moreover, since $f$ is assumed centered and log-concave, we have that
$\|f\|_{\infty}/f(0)\ls e^n$; this inequality is due to Fradelizi~\cite{Fradelizi-1997}.

The next relation between the bodies $K_t(f)$ and $R_t(f)$ follows directly from the definitions
(see~\cite[Proposition~2.3]{Giannopoulos-Tziotziou-2025}).

\begin{proposition}\label{prop:r-3}
Let $f:\mathbb{R}^n \to [0,\infty)$ be a centered log-concave function with $f(0)>0$.  For every $s\gr t>0$ we have
$$R_t(f)\subseteq e^{t/s} K_s(f).$$
\end{proposition}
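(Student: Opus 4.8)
The plan is to unwind the two definitions and compare the relevant integral quantities pointwise on rays. Fix a direction $\theta\in S^{n-1}$ and consider the one-dimensional profile $u\mapsto f(u\theta)$, which is log-concave on $[0,\infty)$. The body $R_t(f)$ along $\theta$ is the interval up to $\varrho_{R_t(f)}(\theta)$, the largest $u$ with $f(u\theta)\gr e^{-t}f(0)$; since $f$ is log-concave and $f(0)=\|f\|_\infty$ (after normalizing, or more precisely $f(0)>0$ with the standard decay \eqref{eq:A-B}), this is equivalently the point where $-\ln(f(u\theta)/f(0))$ first reaches $t$. The body $K_s(f)$ along $\theta$ has radial function given by \eqref{eq:radial-Kp}, namely $\varrho_{K_s(f)}(\theta)=\bigl(\frac{s}{f(0)}\int_0^\infty r^{s-1}f(r\theta)\,dr\bigr)^{1/s}$. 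So the claimed inclusion $R_t(f)\subseteq e^{t/s}K_s(f)$ is equivalent to the scalar inequality
$$\varrho_{R_t(f)}(\theta)\;\ls\; e^{t/s}\,\varrho_{K_s(f)}(\theta)\qquad\text{for every }\theta\in S^{n-1}.$$

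First I would set $\rho:=\varrho_{R_t(f)}(\theta)$, so that $f(\rho\theta)\gr e^{-t}f(0)$. By log-concavity of $r\mapsto f(r\theta)$ along the segment from $0$ to $\rho\theta$, for every $r\in[0,\rho]$ we have $f(r\theta)\gr f(0)^{1-r/\rho}f(\rho\theta)^{r/\rho}\gr f(0)\,e^{-tr/\rho}$, using $f(\rho\theta)\gr e^{-t}f(0)$ and $0\ls r/\rho\ls 1$. Plugging this lower bound into the radial integral for $K_s(f)$ and restricting the integration to $[0,\rho]$ gives
$$\frac{s}{f(0)}\int_0^\infty r^{s-1}f(r\theta)\,dr\;\gr\;\frac{s}{f(0)}\int_0^{\rho} r^{s-1}f(0)\,e^{-tr/\rho}\,dr
\;=\;s\int_0^{\rho} r^{s-1}e^{-tr/\rho}\,dr.$$
Substituting $r=\rho v$ turns this into $\rho^s\cdot s\int_0^1 v^{s-1}e^{-tv}\,dv$, so it remains to show $s\int_0^1 v^{s-1}e^{-tv}\,dv\gr e^{-t}$, i.e. that the exponent picked up is no worse than $e^{-t/s}$ after taking $s$-th roots. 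This follows because $e^{-tv}\gr e^{-t}$ for $v\in[0,1]$ is too lossy; instead use $e^{-tv}\gr e^{-tv}$ and the cleaner estimate $\int_0^1 v^{s-1}e^{-tv}\,dv\gr \bigl(\int_0^1 v^{s-1}\,dv\bigr)\cdot e^{-t\cdot(\text{weighted mean of }v)}$ via Jensen applied to the convex function $v\mapsto e^{-tv}$ against the probability density $s v^{s-1}$ on $[0,1]$: this gives $s\int_0^1 v^{s-1}e^{-tv}\,dv\gr e^{-t\,\E v}$ where $\E v=\frac{s}{s+1}<1$, hence $\gr e^{-t}$, which after taking $s$-th roots yields exactly $\varrho_{K_s(f)}(\theta)\gr \rho\,e^{-t/s}$ as required (and in fact a touch better).

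The only genuinely delicate point is the final scalar inequality relating the truncated incomplete-Gamma-type integral to $e^{-t/s}$; everything else is the routine translation between radial functions and the defining conditions, plus one application of log-concavity along a ray. I would present the Jensen step (or equivalently the substitution reducing to $s\int_0^1 v^{s-1}e^{-tv}\,dv\gr e^{-ts/(s+1)}\gr e^{-t}$) carefully, since that is where the exponent $e^{t/s}$ in the statement is produced, and double-check that the hypothesis $s\gr t$ is used — here it enters only to ensure the constant in front is of the stated form and that $e^{t/s}\gr 1$, so that the inclusion is a genuine dilation rather than a contraction. If the paper's intended constant is exactly $e^{t/s}$ with no slack, I would note that the bound above actually gives $e^{-ts/(s+1)}$ and hence the slightly stronger $R_t(f)\subseteq e^{t/(s+1)}K_s(f)$, and simply weaken it to match the cited statement.
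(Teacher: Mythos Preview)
Your argument is correct and is exactly the kind of direct computation the paper alludes to (the paper itself gives no proof, merely citing \cite[Proposition~2.3]{Giannopoulos-Tziotziou-2025} and remarking that the inclusion ``follows directly from the definitions''). The core steps --- comparing radial functions, using log-concavity along the ray to get $f(r\theta)\gr f(0)e^{-tr/\rho}$ on $[0,\rho]$, and then bounding the resulting integral --- are all right.

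Two small expository points. First, your remark that the trivial bound $e^{-tv}\gr e^{-t}$ on $[0,1]$ is ``too lossy'' is mistaken: it gives precisely
\[
s\int_0^1 v^{s-1}e^{-tv}\,dv \;\gr\; e^{-t}\cdot s\int_0^1 v^{s-1}\,dv \;=\; e^{-t},
\]
which after taking $s$-th roots yields $\varrho_{K_s(f)}(\theta)\gr \rho\,e^{-t/s}$ exactly. The Jensen detour is unnecessary (though it does give the slightly sharper $e^{t/(s+1)}$ you note). Second, the hypothesis $s\gr t$ is in fact never used in the argument; the inclusion $R_t(f)\subseteq e^{t/s}K_s(f)$ holds for all $s,t>0$ for which the bodies are defined. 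The restriction $s\gr t$ in the statement simply ensures the dilation factor is at most $e$, which is how the result is applied later (e.g.\ in \eqref{eq:R-K-1}).
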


In the opposite direction we use the following estimate
(see~\cite[Proposition~2.4]{Giannopoulos-Tziotziou-2025}).

\begin{proposition}\label{prop:r-4}
Let $f:\mathbb{R}^n \to [0,\infty)$ be a centered log-concave function with $f(0)>0$.  For every $t\gr 2n$,
$$R_{5t}(f)\supseteq \left(1-\frac{2n}{t}\right)K_t(f).$$
If in addition $f$ is even, then
$$R_{5t}(f)\supseteq \bigl(1-e^{-t}\bigr)K_t(f).$$
\end{proposition}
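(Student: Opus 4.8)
The plan is to reduce the inclusion to a one-dimensional statement along rays through the origin and to argue by contradiction using the radial description \eqref{eq:radial-Kp} of $K_t(f)$. For $\theta\in S^{n-1}$ set $h_\theta(r):=f(r\theta)$, $r\gr0$; this is a nonnegative log-concave function on $[0,\infty)$ with $h_\theta(0)=f(0)$, and \eqref{eq:radial-Kp} reads
$$\varrho_{K_t(f)}(\theta)^{\,t}=\frac{t}{f(0)}\int_0^\infty r^{t-1}h_\theta(r)\,dr,$$
a quantity which is finite by \eqref{eq:A-B} and strictly positive since $h_\theta$ is positive and continuous near $0$. Suppose, towards a contradiction, that some $x_0\in\bigl(1-\tfrac{2n}{t}\bigr)K_t(f)$ satisfies $f(x_0)<e^{-5t}f(0)$; then $x_0\neq0$ (otherwise $f(0)\gr e^{-5t}f(0)$ fails only if $f(0)=0$), and, writing $\theta=x_0/|x_0|$, $a=|x_0|$ and $\alpha=\ln\bigl(f(0)/f(x_0)\bigr)$, we have $\alpha>5t$, $h_\theta(a)=f(0)e^{-\alpha}$ and $a\ls\bigl(1-\tfrac{2n}{t}\bigr)\varrho_{K_t(f)}(\theta)$.

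The heart of the argument is a pair of pointwise bounds for $h_\theta$. On $[0,a]$ I would invoke Fradelizi's inequality $\|f\|_\infty\ls e^n f(0)$ to get $h_\theta(s)\ls e^n f(0)$; on $[a,\infty)$ I would use that $\ln h_\theta$ is concave, with $\ln h_\theta(0)=\ln f(0)$ and $\ln h_\theta(a)=\ln f(0)-\alpha$, to obtain by interpolation (writing $a=(1-\tfrac as)\cdot 0+\tfrac as\cdot s$) the bound $\ln h_\theta(s)\ls\ln f(0)-\tfrac{\alpha}{a}s$, i.e. $h_\theta(s)\ls f(0)e^{-\alpha s/a}$ for $s\gr a$. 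Splitting the integral at $a$ and using $\int_0^a r^{t-1}\,dr=a^t/t$ together with $\int_a^\infty r^{t-1}e^{-\alpha r/a}\,dr\ls a^t\int_0^\infty u^{t-1}e^{-\alpha u}\,du=a^t\Gamma(t)/\alpha^t$, this yields
$$\varrho_{K_t(f)}(\theta)^{\,t}\ls a^t\Bigl(e^n+\frac{\Gamma(t+1)}{\alpha^t}\Bigr).$$
Substituting $a\ls(1-\tfrac{2n}{t})\varrho_{K_t(f)}(\theta)$ and dividing by $\varrho_{K_t(f)}(\theta)^{\,t}>0$ gives
$$1\ls\Bigl(1-\tfrac{2n}{t}\Bigr)^{\!t}\Bigl(e^n+\frac{\Gamma(t+1)}{\alpha^t}\Bigr)\ls e^{-2n}\Bigl(e^n+\frac{t!}{(5t)^t}\Bigr)\ls e^{-n}+\tfrac12 e^{-2n}<1,$$
using $(1-\tfrac{2n}{t})^t\ls e^{-2n}$, $\alpha>5t$ and the crude estimate $t!/(5t)^t\ls\tfrac12$. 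This is the desired contradiction.

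For the even case the only change occurs in the first pointwise bound: if $f$ is even then $h_\theta$ is an even log-concave function on $\mathbb{R}$, hence $h_\theta(s)\ls h_\theta(0)=f(0)$ for all $s$, so the factor $e^n$ above may be replaced by $1$. Running the same computation with the deficit $1-e^{-t}$ in place of $1-\tfrac{2n}{t}$ produces $1\ls(1-e^{-t})^t\bigl(1+\Gamma(t+1)/\alpha^t\bigr)$, hence $1-(1-e^{-t})^t\ls\Gamma(t+1)/\alpha^t<t!/(5t)^t$; since $(1-e^{-t})^t\ls e^{-te^{-t}}$ and $1-e^{-y}\gr y/2$ for $0\ls y\ls1$ the left-hand side is at least $\tfrac t2 e^{-t}$, while Stirling's estimate $t!\ls e\sqrt{t}\,(t/e)^t$ bounds the right-hand side by $e\sqrt t\,(5e)^{-t}$, which is strictly smaller than $\tfrac t2 e^{-t}$ for every $t\gr2$ (in particular for $t\gr2n$). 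This contradiction finishes the even case. I expect the main obstacle to be the bookkeeping in the two chains of inequalities rather than any single ingredient: Fradelizi's factor $e^n$ must be exactly absorbed by the gain $(1-\tfrac{2n}{t})^t\ls e^{-2n}$, and it is precisely this balance that forces a homothety deficit of order $n/t$ in the general case and that collapses — allowing the exponentially small deficit $e^{-t}$ — once evenness removes the $e^n$ loss; the $\Gamma$-function tail is harmless thanks to the comfortable constant $5$ in $R_{5t}(f)$.
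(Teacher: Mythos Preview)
Your argument is correct. The paper does not actually include a proof of this proposition; it is quoted from \cite{Giannopoulos-Tziotziou-2025}, so there is nothing here to compare against directly. That said, your approach---reduce to a ray, bound $h_\theta$ by Fradelizi's $e^n$ on $[0,a]$ and by the log-linear majorant $f(0)e^{-\alpha s/a}$ on $[a,\infty)$, then confront the resulting upper bound on the radial integral with the formula \eqref{eq:radial-Kp}---is the standard way such inclusions are proved and is almost certainly what the cited reference does. One cosmetic point: your Stirling constant $e$ in $\Gamma(t+1)\ls e\sqrt{t}(t/e)^t$ is slightly too small (the sharp constant is $\sqrt{2\pi}\,e^{1/12}\approx 2.73$), but replacing $e$ by $3$ changes nothing in the final comparison, which has a comfortable margin for all $t\gr 2$.
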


We now introduce the convex body
\begin{equation}\label{eq:def-Rf}R_f := R_{50n}(f).\end{equation}

\begin{lemma}\label{lem:bounded-gd}Let $f:\mathbb{R}^n \to [0,\infty)$ be a centered log-concave function with $f(0)>0$. 
There exists a centered convex body $K\subset\mathbb{R}^n$ such that
$$d_{{\rm G}}(R_f,K)\ls C,$$
where $C>0$ is an absolute constant. In fact, we may choose $K=K_{n+1}(f)$.
\end{lemma}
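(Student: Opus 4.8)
The plan is to take $K=K_{n+1}(f)$ and show directly that $R_f=R_{50n}(f)$ and $K_{n+1}(f)$ are within bounded geometric distance, using Propositions~\ref{prop:r-3} and~\ref{prop:r-4} together with the inclusions~\eqref{eq:inclusions-Kp} to pass between the Ball bodies $K_t(f)$ at different parameters $t$. First I would establish the inclusion $R_f\subseteq CK_{n+1}(f)$: applying Proposition~\ref{prop:r-3} with $t=50n$ and $s=n+1$ would give $R_{50n}(f)\subseteq e^{50n/(n+1)}K_{n+1}(f)\subseteq e^{50}K_{n+1}(f)$, so one direction is immediate with constant $e^{50}$. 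For the reverse inclusion $K_{n+1}(f)\subseteq CR_f$, I would use Proposition~\ref{prop:r-4}: choosing $t=10n$ we have $5t=50n$ and $t=10n\gr 2n$, so $R_{50n}(f)\supseteq\bigl(1-\tfrac{2n}{10n}\bigr)K_{10n}(f)=\tfrac45 K_{10n}(f)$. It then remains to compare $K_{n+1}(f)$ with $K_{10n}(f)$.

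The comparison of $K_{n+1}(f)$ and $K_{10n}(f)$ is where~\eqref{eq:inclusions-Kp} enters, and this is the step that requires a little care. Applying~\eqref{eq:inclusions-Kp} with $t=n+1$ and $s=10n$ gives
$$\frac{\Gamma(n+2)^{1/(n+1)}}{\Gamma(10n+1)^{1/(10n)}}\,K_{10n}(f)\subseteq K_{n+1}(f)\subseteq\left(\frac{\|f\|_\infty}{f(0)}\right)^{\frac{1}{n+1}-\frac{1}{10n}}K_{10n}(f).$$
For the right-hand inclusion I would use Fradelizi's bound $\|f\|_\infty/f(0)\ls e^n$, so the homothety factor is at most $e^{n(\frac{1}{n+1}-\frac{1}{10n})}\ls e$, giving $K_{n+1}(f)\subseteq e\,K_{10n}(f)$. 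Combined with $R_f\supseteq\tfrac45 K_{10n}(f)$, this yields $K_{n+1}(f)\subseteq e\,K_{10n}(f)\subseteq\tfrac{5e}{4}R_f$. Together with $R_f\subseteq e^{50}K_{n+1}(f)$ from the first paragraph, we obtain $d_{\mathrm G}(R_f,K_{n+1}(f))\ls \tfrac{5e}{4}\cdot e^{50}$, an absolute constant.

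It remains only to note that $K_{n+1}(f)$ is a centered convex body: convexity is Ball's theorem~\cite{Ball-1988}, boundedness and $0\in\mathrm{int}(K_{n+1}(f))$ follow from the bounded geometric distance to $R_f$ (which we showed is a convex body above, using~\eqref{eq:A-B}), and centeredness of $K_{n+1}(f)$ for centered $f$ is standard (it follows from the definition of $K_t(f)$ via the radial function~\eqref{eq:radial-Kp} together with the fact that $\mathrm{bar}(f)=0$; see~\cite{Ball-1988, BGVV-book}). The main obstacle, such as it is, is bookkeeping: one must check that the parameter choices ($50n$ versus $5t$, and $n+1$ versus $10n$) satisfy the hypotheses $t\gr 2n$ and $0<t\ls s$ of the cited propositions, and that all the resulting $\Gamma$-ratios and homothety exponents are bounded by absolute constants independent of $n$ — which they are, precisely because $50n$, $10n$ and $n+1$ are all comparable to $n$.
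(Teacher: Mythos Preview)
Your reverse inclusion $K_{n+1}(f)\subseteq\tfrac{5e}{4}R_f$ is exactly the paper's argument. However, your forward inclusion has a gap: you invoke Proposition~\ref{prop:r-3} with $t=50n$ and $s=n+1$, but that proposition is stated only for $s\gr t$, and here $n+1<50n$. (You even flag the hypothesis ``$0<t\ls s$'' in your final paragraph and then violate it in the first.) The paper instead applies Proposition~\ref{prop:r-3} with $s=t=50n$, giving $R_f\subseteq e\,K_{50n}(f)$, and then passes from $K_{50n}(f)$ to $K_{n+1}(f)$ via the left-hand inclusion of~\eqref{eq:inclusions-Kp}: with $t=n+1$ and $s=50n$ one has
$$K_{50n}(f)\subseteq\frac{\Gamma(50n+1)^{1/(50n)}}{\Gamma(n+2)^{1/(n+1)}}\,K_{n+1}(f),$$
and by Stirling this ratio is bounded by an absolute constant. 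The inclusion you wrote down is in fact true --- the argument behind Proposition~\ref{prop:r-3} does not actually require $s\gr t$ --- but as written your citation lies outside its stated hypotheses and should be replaced either by this two-step route or by a remark that the constraint in Proposition~\ref{prop:r-3} is inessential.
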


\begin{proof}Using Proposition~\ref{prop:r-3} and \eqref{eq:inclusions-Kp} we obtain
\begin{equation}\label{eq:R-K-1}
R_f = R_{50n}(f)\subseteq e\, K_{50n}(f)\subseteq \alpha_1 K_{n+1}(f),
\end{equation}
for some absolute constant $\alpha_1>0$.  
Using Proposition~\ref{prop:r-4} and \eqref{eq:inclusions-Kp}, and taking into account Fradelizi's inequality,
\begin{equation}\label{eq:R-K-2}
\frac{4}{5e} K_{n+1}(f)\subseteq \frac{4}{5} K_{10n}(f)=
\left(1-\frac{2n}{10n}\right)K_{10n}(f)\subseteq R_{50n}(f)= R_f.
\end{equation}
Thus,
\begin{equation}\label{eq:R-K}
\alpha_2 K_{n+1}(f)\subseteq R_f \subseteq \alpha_1 K_{n+1}(f),\qquad\alpha_2=\tfrac{4}{5e}.
\end{equation}
Since $K_{n+1}(f)$ is centered (see \cite[Proposition~2.5.3]{BGVV-book}),
the lemma is proved.\end{proof}

\begin{remark}\label{rem:almost}\rm If $f$ is isotropic, then the convex body $K_{n+1}(f)$ is almost isotropic in the sense of \cite[Definition~2.5.11]{BGVV-book}:
for any $T\in GL_n$ such that $T(K_{n+1}(f))$ is isotropic, we have that 
$$\alpha^{-1}B_2^n\subseteq T(B_2^n)\subseteq \alpha B_2^n,$$
where $\alpha \gr 1$ is an absolute constant. For a proof of this claim, see \cite[Proposition~2.5.12]{BGVV-book}. It follows that
$$N(K_{n+1}(f), tB_2^n)=N(T(K_{n+1}(f)), tT(B_2^n))\ls N(T(K_{n+1}(f)), \alpha^{-1}tB_2^n)$$
for every $t\gr 1$, and similar inequalities are valid for the other three covering numbers $N(B_2^n, tK_{n+1}(f))$, $N((K_{n+1}(f))^{\circ},tB_2^n)$ and 
$N(B_2^n, t(K_{n+1}(f))^{\circ})$. Therefore, \eqref{eq:M-position-a} and \eqref{eq:M-position-b}
hold true for $K_{n+1}(f)$.
\end{remark}

In what follows, we also set
\begin{equation}\label{eq:r_f}r_f=\left(\frac{\vol_n(R_f)}{\omega_n}\right)^{1/n}.\end{equation}
We need three intermediate results.

\begin{proposition}\label{prop:fcover-1}
Let $f:\mathbb{R}^n\to[0,\infty)$ be a geometric log-concave function.
If $\vol_n(R_f)^{1/n}\approx 1$ and $R_f$ satisfies 
\begin{equation}\max\Bigl\{N(R_f,\, t r_f B_2^n),\,
N(B_2^n,\, t r_f (R_f)^{\circ})\Bigr\}\ls \exp\left( \tfrac{c_1 \kappa_n^2 n}{t^2}\right),
\label{eq:R-covering-a}\end{equation}
for some $\kappa_n\gr 1$ and every $t\gr 1$, then
$$N(f,t\odot g)\ls \exp\!\left(\frac{\kappa_n n}{t}\right)\qquad\text{for all } t\gr 1.$$
\end{proposition}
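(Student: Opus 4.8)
The plan is to transfer the geometric covering estimate \eqref{eq:R-covering-a} for $R_f$ to a functional covering estimate for $f$ by $t\odot g$ via two layer‑cake (distribution‑function) decompositions: one writing $f$ as a superposition of indicators of the bodies $R_t(f)$, and one writing the Gaussian $g$ as a superposition of indicators of Euclidean balls. Concretely, the key point is that $R_f = R_{50n}(f)$ controls $f$ from above: since $f$ is geometric log‑concave with $f(0)=1$, one has $f(x)\ls e^{-50n\|x\|_{R_f}}$ for all $x$ (because $\|x\|_{R_f}\ls 1$ forces $x\in R_f$ hence $f(x)\ls 1$, while for general $x$ log‑concavity along the ray through $x$ gives the exponential decay with the stated rate; more precisely $-\ln f$ is convex, vanishes at $0$, and is $\gr 50n$ on $\partial R_f$, so it is $\gr 50n\|x\|_{R_f}$ everywhere). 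Thus by Lemma~\ref{lem:properties}(iv) it suffices to bound $N\bigl(e^{-50n\|\cdot\|_{R_f}},\, t\odot g\bigr)$.

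Next I would discretize both functions dyadically in their level sets. Write $e^{-50n\|x\|_{R_f}}$ as (a constant multiple of) an integral/sum of indicators $\mathds 1_{s R_f}$ over $s$ in a geometric progression, and similarly bound $t\odot g(x)=e^{-|x|^2/(2t^2)}$ from below by a single well‑chosen indicator $\lambda\,\mathds 1_{\rho B_2^n}$ (or a short superposition of such). Using Lemma~\ref{lem:properties}(i)–(iii), the subadditivity $N(f_1+f_2,g)\ls N(f_1,g)+N(f_2,g)$ and the homogeneity $N(af,bg)=\tfrac ab N(f,g)$, the problem reduces to estimating finitely many terms of the form $N\bigl(\mathds 1_{s R_f},\,\lambda\mathds 1_{\rho B_2^n}\bigr)=\lambda^{-1}N(\mathds 1_{sR_f},\mathds 1_{\rho B_2^n})\ls \lambda^{-1}N(sR_f,\rho B_2^n)$, the last step by \eqref{eq:indicator-covering}. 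Each geometric covering number $N(sR_f,\rho B_2^n)=N(R_f,(\rho/s)B_2^n)$ is controlled by \eqref{eq:R-covering-a} provided $\rho/s\gr r_f$, i.e. in the relevant range of scales; here the hypothesis $\vol_n(R_f)^{1/n}\approx 1$ (equivalently $r_f\approx\sqrt n$) is used to line up the Gaussian scale $\rho\approx t\sqrt n$ with $r_f$ so that the effective parameter in \eqref{eq:R-covering-a} is $\approx t$, which is exactly what produces the exponent $\gamma_n^2 n/t$ rather than $\gamma_n^2 n/t^2$ — the single power of $t$ appears because one power of $t$ is "spent" converting the linear exponential weight $50n\|\cdot\|_{R_f}$ into matching the quadratic Gaussian, while the body $R_f$ is already handled at scale $t$.

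The main obstacle, and the step requiring care, is the bookkeeping of the two decompositions so that the number of terms and the normalizing constants $\lambda$ do not destroy the bound: the sum over the dyadic levels must be a geometric series dominated by its largest term, and the weight losses $\lambda^{-1}$ from Lemma~\ref{lem:properties}(i) must be absorbed. This is where the specific choices $50n$ in the definition of $R_f$ and the scale $t\sqrt n$ for the Gaussian are calibrated. I would handle it by choosing the ball $\rho B_2^n$ in the lower bound for $t\odot g$ with $\rho$ of order $t\sqrt n$ and the level $\lambda$ a fixed negative power of $e$ (dimension‑free), so that $\lambda^{-1}$ contributes only a factor $e^{O(n)}/t$‑free, and then noting that for scales $s$ with $s R_f$ much larger than $\rho B_2^n$ the covering number is $1$, so only $O(n)$ dyadic levels contribute, each bounded by $\exp(c_1\gamma_n^2 n/t^2)\cdot(\text{something})$; summing and optimizing the cutoff between the "large $s$" and "small $s$" regimes yields $\exp(\gamma_n^2 n/t)$ after adjusting absolute constants. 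Throughout, Lemma~\ref{lem:properties}(v) (submultiplicativity) can be invoked if one prefers to factor the estimate through an intermediate indicator rather than decompose directly.
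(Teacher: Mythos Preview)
Your overall strategy is the paper's: bound $f$ by a superposition of indicators of dilates of $R_f$, bound $t\odot g$ below by a constant times the indicator of a ball, and pass to the geometric hypothesis via Lemma~\ref{lem:properties} and \eqref{eq:indicator-covering}. One minor slip: the pointwise bound $f(x)\ls e^{-50n\|x\|_{R_f}}$ is false in the interior of $R_f$ (take $f=\mathds 1_K$, so $R_f=K$ and $f\equiv 1$ there while $e^{-50n\|x\|_K}<1$ for $x\neq 0$). Your convexity argument only works for $\|x\|_{R_f}\gr 1$; the paper uses $f\ls\mathds 1_{R_f}+e^{-50n\|\cdot\|_{R_f}}$ and then decomposes $e^{-50n\|\cdot\|_{R_f}}\ls\sum_{k\gr 0}e^{-k}\mathds 1_{\frac{k+1}{50n}R_f}$.

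The substantive gap is the calibration of the intermediate ball. With $\rho\approx t\sqrt n$ one gets $(t\odot g)\gr e^{-\rho^2/(2t^2)}=e^{-cn}$ on $\rho B_2^n$, so $\lambda^{-1}\approx e^{cn}$ is \emph{not} dimension-free and cannot be absorbed into $\exp(\gamma_n^2 n/t)$ once $t\gg\gamma_n^2$. The paper's key choice is $\rho=\sqrt t\,r_f\approx\sqrt{tn}$: then \eqref{eq:R-covering-a} with parameter $\sqrt t$ gives $N(R_f,\sqrt t\,r_f B_2^n)\ls\exp(c\gamma_n^2 n/t)$, while on $\sqrt t\,r_f B_2^n$ one has $(t\odot g)\gr e^{-r_f^2/(2t)}\gr e^{-cn/t}$, so the weight loss is also $e^{cn/t}$. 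Splitting $t=\sqrt t\cdot\sqrt t$ between the covering scale and the Gaussian level is precisely what converts the $t^{-2}$ in the hypothesis into the $t^{-1}$ in the conclusion. (Your remark that large dilates $sR_f$ have covering number $1$ by $\rho B_2^n$ is also backwards; in the paper the terms $\frac{k+1}{50n}R_f$ pick up an extra volumetric factor $N(B_2^n,\tfrac{50n}{k+1}B_2^n)\ls e^{(k+1)/25}$, which the weight $e^{-k}$ in the layer-cake sum then absorbs.)
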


\begin{proof}
We first claim that
\begin{equation}\label{eq:prop-fcover-1.1}
f(x)\ls \mathds{1}_{R_f}(x)+\exp\!\left(-50n\,\|x\|_{R_f}\right),\qquad x\in\mathbb{R}^n.
\end{equation}
If $x\in R_f$, then $f(x)\ls 1=\mathds{1}_{R_f}(x)$. If $x\notin R_f$, then $\lambda:=\|x\|_{R_f}>1$ and by definition
$f(x/\lambda)=\exp(-50n)f(0)$. Log-concavity yields
$$f(x/\lambda)\gr f(x)^{\frac{1}{\lambda}}f(0)^{1-\frac{1}{\lambda}}$$
and hence
\begin{equation}\label{eq:no-need-1}f(x)\ls \left(\frac{f(x/\lambda)}{f(0)}\right)^{\lambda}f(0)=\exp\!\left(-50n\,\|x\|_{R_f}\right)f(0)
\ls \exp\!\left(-50n\,\|x\|_{R_f}\right).\end{equation}
Next, decompose $\exp(-50n\|x\|_{R_f})$ as
\begin{equation}\label{eq:fcover-exp}
\exp(-50n\|x\|_{R_f})=\sum_{k=0}^{\infty}e^{-50n\|x\|_{R_f}}
\,\mathds{1}_{\{k\ls 50n\|x\|_{R_f}<k+1\}}(x)
\ls \sum_{k=0}^{\infty}e^{-k}\,\mathds{1}_{\frac{k+1}{50n}R_f}(x).
\end{equation}
Combining \eqref{eq:prop-fcover-1.1}, \eqref{eq:fcover-exp}, and Lemma~\ref{lem:properties}\,(iii),
\begin{equation}\label{eq:fcover-sum}
N(f,t\odot g)\ls N(\mathds{1}_{R_f},t\odot g)+\sum_{k=0}^{\infty}e^{-k}\,
N(\mathds{1}_{\frac{k+1}{50n}R_f},\,t\odot g).
\end{equation}

We begin with $N(\mathds{1}_{R_f},t\odot g)$.
By submultiplicativity,
\begin{equation}\label{eq:fcover-first}
N(\mathds{1}_{R_f},t\odot g)\ls N(\mathds{1}_{R_f},\,\mathds{1}_{\sqrt{\kappa_nt}r_f B_2^n})
\,N(\mathds{1}_{\sqrt{\kappa_nt}r_f B_2^n},\,t\odot g).
\end{equation}
By \eqref{eq:indicator-covering} and \eqref{eq:R-covering-a},
\begin{equation}\label{eq:fcover-first-1}
N(\mathds{1}_{R_f},\,\mathds{1}_{\sqrt{\kappa_nt}r_f B_2^n})\ls \exp\!\left(\frac{c\kappa_n n}{t}\right).
\end{equation}
Since $\vol(R_f)^{1/n}\approx 1$, we have $r_f\approx\sqrt{n}$, and for $x\in \sqrt{\kappa_nt}r_f B_2^n$,
$$\exp\!\left(-\frac{|x|^2}{2t^2}\right)\gr\exp\left(-\frac{\kappa_nr_f^2}{2t}\right) 
\gr\exp(-c\kappa_nn/t),$$
where $c>0$ is an absolute constant. Thus
$$\mathds{1}_{\sqrt{\kappa_nt}r_f B_2^n}(x)\ls e^{c\kappa_nn/t}(t\odot g)(x),$$
and hence (by Lemma~\ref{lem:properties}\,(i))
\begin{equation}\label{eq:fcover-first-2}
N(\mathds{1}_{\sqrt{\kappa_nt}r_f B_2^n},t\odot g)\ls e^{c\kappa_nn/t}.
\end{equation}
Combining \eqref{eq:fcover-first-1}--\eqref{eq:fcover-first-2} gives
\begin{equation}\label{eq:fcover-first-final}
N(\mathds{1}_{R_f},t\odot g)\ls \exp\!\left(\frac{c_1\kappa_n n}{t}\right).
\end{equation}

Next, we give an upper bound for the sum in \eqref{eq:fcover-sum}. For $k\gr 0$, using \eqref{eq:fcover-first-2}
and submultiplicativity, we write
\begin{align*}N(\mathds{1}_{\frac{k+1}{50n}R_f},t\odot g) &\ls N(\mathds{1}_{\frac{k+1}{50n}R_f},\mathds{1}_{\sqrt{\kappa_nt}r_f B_2^n})
\,N(\mathds{1}_{\sqrt{\kappa_nt}r_f B_2^n},t\odot g)\\
&\ls e^{c\kappa_nn/t} N(R_f,\sqrt{\kappa_nt}r_f B_2^n)\,N\!\left(B_2^n,\tfrac{50n}{k+1}B_2^n\right)\,.
\end{align*}
Using \eqref{eq:R-covering-a} and the bound $N(B_2^n,\lambda B_2^n)\ls (1+2/\lambda)^n$,
$$N(\mathds{1}_{\frac{k+1}{50n}R_f},t\odot g)\ls \exp\!\left(\frac{c_2\kappa_n n}{t}\right)e^{\frac{k+1}{25}}.$$
It follows that
$$\sum_{k=0}^\infty e^{-k}N(\mathds{1}_{\frac{k+1}{50n}R_f},t\odot g)\ls 
C_1\exp\!\left(\frac{c_2\kappa_nn}{t}\right),$$
and together with \eqref{eq:fcover-sum} and \eqref{eq:fcover-first-final},
$$N(f,t\odot g)\ls \exp\!\left(\frac{c_1\kappa_n n}{t}\right)
+C_1\exp\!\left(\frac{c_2\kappa_n n}{t}\right),$$
which gives the required bound.
\end{proof}

\begin{remark}\label{rem:fcover-1}\rm (i) In the proof of Proposition~\ref{prop:fcover-1} we used only the bound
$N(R_f,\, t r_f B_2^n)\ls \exp\left( \tfrac{c_1 \kappa_n^2 n}{t^2}\right)$ from \eqref{eq:R-covering-a}.
Moreover, in \eqref{eq:no-need-1} we only use the fact that $f(0)\ls\|f\|_{\infty}=1$. Therefore,
Proposition~\ref{prop:fcover-1} holds for every log-concave function $f:\mathbb{R}^n\to[0,\infty)$ with $\|f\|_{\infty}=1$
that satisfies the first bound in \eqref{eq:R-covering-a}.

\smallskip 

(ii) Note also that, instead of decomposing $\exp(-50n\|x\|_{R_f})$ into a sum, as in \eqref{eq:fcover-exp}, we may write
$$e^{-50n\|x\|_{R_f}}=50n\int_0^{\infty}e^{-50ns}\mathds{1}_{sR_f}(x,s)\,ds=\int_0^{\infty}e^{-u}\mathds{1}_{\frac{u}{50n}R_f}(x,u)\,du$$
and then estimate the functional covering number $N(\exp(-50n\,\|x\|_{R_f}),t\odot g)$ writing
$$N(\exp(-50n\,\|x\|_{R_f}),t\odot g)\ls \int_0^{\infty}e^{-u}N(\mathds{1}_{\frac{u}{50n}R_f},t\odot g)\,du.$$
Using the same argument as in the proof above, we get
$$N(\mathds{1}_{\frac{u}{50n}R_f},t\odot g)\ls \exp\!\left(\frac{c_2\kappa_n n}{t}\right)e^{\frac{u}{25}},$$
and hence
$$N(\exp(-50n\,\|x\|_{R_f}),t\odot g)\ls \exp\!\left(\frac{c_2\kappa_n n}{t}\right)\int_0^{\infty}e^{-u}e^{\frac{u}{25}}du.$$
The last integral is bounded by an absolute constant, and Proposition~\ref{prop:fcover-1} follows.
\end{remark}

\begin{proposition}\label{prop:fcover-2}
Let $f:\mathbb{R}^n\to[0,\infty)$ be a geometric log-concave function.
If $\vol_n(R_f)^{1/n}\approx 1$ and $R_f$ satisfies 
\begin{equation}\max\Bigl\{N(r_f B_2^n,\, t R_f),\,N(r_f R_f^{\circ},\, t B_2^n)\Bigr\}
\ls \exp\!\left( \tfrac{c_1 \kappa_n^2 n}{t^2} \right)\label{eq:R-covering-b}
\end{equation} 
for some $\kappa_n\gr 1$ and every $t\gr 1$, then
$$N(g,t\odot f)\ls \exp\!\left(\frac{\kappa_n n}{t}\right)\qquad\text{for all } t\gr 1.$$
\end{proposition}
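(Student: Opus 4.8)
The plan is to run the argument of Proposition~\ref{prop:fcover-1} in reverse: instead of dominating $f$ by indicators and a decaying exponential, I would \emph{underestimate} $f$ by a single indicator $\mathds{1}_{R_f}$ (up to a constant, using that $f\gr e^{-50n}$ on $R_f$ and $\|f\|_\infty=1$), so that $N(g,t\odot f)$ is controlled by $N(g,t\odot\mathds{1}_{R_f})$ via Lemma~\ref{lem:properties}\,(iv). The key point is that covering \emph{by} $f$ is governed by the behavior of $f$ near its peak, so only one body $R_f$ is needed and there is no infinite sum to sum up — this is why the estimate is cleaner than in Proposition~\ref{prop:fcover-1}. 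Precisely, since $e^{-50n}\mathds{1}_{R_f}\ls f$, one gets $(t\odot f)(x)\gr e^{-50n}\mathds{1}_{tR_f}(x)$, and Lemma~\ref{lem:properties}\,(i),(iv) give $N(g,t\odot f)\ls e^{50n}\,N(g,\mathds{1}_{tR_f})$. This exponential loss $e^{50n}$ is harmless after one more homothety trick (see below), but it does mean I should instead first pass to an intermediate Euclidean ball and only then lose constants.

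Concretely, I would insert a Euclidean ball $\sqrt{t}\,r_f B_2^n$ between $g$ and $t\odot f$. On one side, $g$ is covered by a dilate of $\mathds{1}_{\sqrt t\,r_f B_2^n}$ at exponential cost $e^{cn/t}$ — indeed $g(x)=\exp(-|x|^2/2)$ and a standard computation (Lemma~\ref{lem:properties} plus the pointwise bound, exactly as in \eqref{eq:fcover-first-2}) gives $N(g,\mathds{1}_{\sqrt t\,r_f B_2^n})\ls e^{cn/t}$, using $r_f\approx\sqrt n$. On the other side, $\mathds{1}_{\sqrt t\,r_f B_2^n}$ is covered by $t\odot f$: since $f\gr e^{-50n}\mathds{1}_{R_f}$ we have $(t\odot f)(x)\gr e^{-50n}\mathds{1}_{tR_f}(x)\gr e^{-50n}\mathds{1}_{\sqrt t\,r_f R_f}(x)$ for $t\gr 1$ (because $\sqrt t\,r_f R_f\subseteq tR_f$ when $r_f\le\sqrt t\,r_f$ — actually one wants $\sqrt t\,r_f\le t$, which may fail, so I would instead compare directly $\mathds{1}_{\sqrt t\,r_f B_2^n}$ with $t\odot f$ by writing $N(\mathds{1}_{\sqrt t\,r_f B_2^n}, t\odot f)\ls N(\mathds{1}_{\sqrt t\,r_f B_2^n},\mathds{1}_{tR_f})\cdot e^{50n}$ and then $N(\mathds{1}_{\sqrt t\,r_f B_2^n},\mathds{1}_{tR_f})\ls N(\sqrt t\,r_f B_2^n, tR_f)$ by \eqref{eq:indicator-covering}). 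Then submultiplicativity (Lemma~\ref{lem:properties}\,(v)) yields
$$N(g,t\odot f)\ls e^{cn/t}\cdot e^{50n}\cdot N(\sqrt t\,r_f B_2^n,\,tR_f).$$
Finally I would bound $N(\sqrt t\,r_f B_2^n, tR_f)\ls N(r_f B_2^n, \sqrt t\,R_f)\cdot N(r_f B_2^n,\sqrt t\,r_f B_2^n)$, wait — cleaner: $N(\sqrt t r_f B_2^n, t R_f)=N(r_f B_2^n,\sqrt t R_f)\ls N(r_f B_2^n,\sqrt t R_f)$ by scaling invariance of covering numbers, and apply the hypothesis \eqref{eq:R-covering-b} with parameter $\sqrt t$: $N(r_f B_2^n,\sqrt t R_f)\ls \exp(c_1\delta_n^2 n/t)$.

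The one genuine obstacle is the stray factor $e^{50n}$ coming from the normalization $f\gr e^{-50n}$ on $R_f$, since on its face it destroys the desired bound $\exp(\delta_n^2 n/t)$ for large $t$. The remedy, which I expect to be the crux of the write-up, is to absorb it: note $\delta_n\gr 1$, so $\exp(\delta_n^2 n/t)$ beats the target only for $t$ up to $\delta_n^2 n$; for $t$ in a bounded range one can afford a worse absolute-constant-to-the-$n$ bound and then re-derive, while for large $t$ one must be more careful — the trick used already in \cite{Artstein-Slomka-2021} and in Proposition~\ref{prop:fcover-1} is that the $50n$ appears multiplied against $\|x\|_{R_f}$, i.e. it should be incorporated into the decomposition of $f$ rather than factored out crudely. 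So rather than $f\gr e^{-50n}\mathds{1}_{R_f}$, I would use that $\mathds{1}_{R_f}(x)\ls e^{50n\|x\|_{R_f}} f(x)$ fails (wrong direction) — instead, mimic \eqref{eq:fcover-exp}: write $t\odot f\gr e^{-50n}\mathds{1}_{tR_f}$ only to cover the \emph{central} ball $r_f B_2^n$, and observe $r_f B_2^n\subseteq tR_f$ already for $t$ a suitable absolute constant times $1$ (since $r_f B_2^n\approx r_f/r_f\cdot R_f$ up to the covering estimates), so the $e^{50n}$ only multiplies a quantity that is itself $\le \exp(cn)$, and one re-optimizes. Carrying this bookkeeping through carefully — ensuring every exponent stays $\le \delta_n^2 n/t$ with $\delta_n\ls c\ln n$ — is the only delicate part; everything else is a routine application of Lemma~\ref{lem:properties}, \eqref{eq:indicator-covering}, the Gaussian pointwise bound, and the hypothesis \eqref{eq:R-covering-b}.
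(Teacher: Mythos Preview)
Your proposal identifies the right overall shape but contains a genuine gap at precisely the point you flag as ``the one genuine obstacle''. The fix is a one-line application of log-concavity that you do not find: for $x\in R_f$ one has $f(x)\gr e^{-50n}$, and since $f(0)=1$ and $t\gr 1$, log-concavity along the segment from $0$ to $x$ gives
\[
(t\odot f)(x)=f(x/t)\gr f(x)^{1/t}f(0)^{1-1/t}\gr e^{-50n/t}.
\]
Thus $(t\odot f)\gr e^{-50n/t}\,\mathds{1}_{R_f}$, and Lemma~\ref{lem:properties}\,(i),(iv) yield $N(g,t\odot f)\ls e^{50n/t}\,N(g,\mathds{1}_{R_f})$. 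The loss is $e^{50n/t}$, not $e^{50n}$, and it is absorbed directly into the target bound. Your attempted workarounds (splitting into bounded and large $t$, restricting the crude bound to a central ball) do not close this gap, and the inequality $\mathds{1}_{R_f}(x)\ls e^{50n\|x\|_{R_f}}f(x)$ you reach for is indeed in the wrong direction; the correct use of log-concavity is the interpolation above, not a pointwise comparison on all of $\mathbb{R}^n$.

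There is a second, smaller problem. Your claim that $N(g,\mathds{1}_{\sqrt{t}\,r_f B_2^n})\ls e^{cn/t}$ ``exactly as in \eqref{eq:fcover-first-2}'' reverses the roles: \eqref{eq:fcover-first-2} bounds $N(\mathds{1}_{\sqrt{t}\,r_f B_2^n},\,t\odot g)$, not $N(g,\mathds{1}_{B})$. Covering the everywhere-positive Gaussian $g$ by translates of a single indicator cannot be done with one ball and a pointwise inequality; one must decompose $g$ into weighted shells,
\[
g(x)\ls \sum_{k\gr 0} e^{-a^2k^2 r_f^2/2}\,\mathds{1}_{a(k+1)r_f B_2^n}(x),
\]
bound each $N(\mathds{1}_{a(k+1)r_f B_2^n},\mathds{1}_{R_f})$ by a volumetric factor times $N(r_f B_2^n,\sqrt{t}\,R_f)\ls\exp(c_1\delta_n^2 n/t)$ from the hypothesis, and then choose $a\approx\sqrt{t}$ so that the series converges to an absolute constant. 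With the log-concavity step and this annular decomposition in place, the rest of your outline (submultiplicativity, scaling invariance, applying \eqref{eq:R-covering-b} at parameter $\sqrt{t}$) goes through.
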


\begin{proof}
Let $t\gr 1$. If $x\in R_f$, then by definition $f(x)\gr \exp(-50n)f(0)$, and log-concavity gives
\begin{equation}\label{eq:need-1}f(x/t)\gr f(x)^{1/t}f(0)^{1-1/t}=\left(\frac{f(x)}{f(0)}\right)^{1/t}f(0)\gr \exp(-50n/t)f(0)=\exp(-50n/t)\end{equation}
because $f(0)=1$. Hence
$$\exp(-50n/t)\,\mathds{1}_{R_f}(x)\ls (t\odot f)(x),$$
and Lemma~\ref{lem:properties}\,(iv) and (i) yield
\begin{equation}\label{eq:fcover-2.1}
N(g,t\odot f)\ls N(g,\exp(-50n/t)\,\mathds{1}_{R_f}) =e^{50n/t}\,N(g,\mathds{1}_{R_f}).
\end{equation}
Next decompose $g$ into spherical annuli:
\begin{equation}\label{eq:annuli}g(x)=\sum_{k=0}^{\infty}g(x)\,\mathds{1}_{\{ak r_f \ls |x|<a(k+1)r_f\}}(x)
\ls \sum_{k=0}^{\infty}e^{-a^2 k^2 r_f^2/2}\,\mathds{1}_{a(k+1)r_f B_2^n}(x),\end{equation}
for some $a>0$ to be determined. By Lemma~\ref{lem:properties}\,(iii),
\begin{equation}\label{eq:fcover-2.2}
N(g,\mathds{1}_{R_f})\ls \sum_{k=0}^{\infty}e^{-a^2 k^2 r_f^2/2}\,
N(\mathds{1}_{a(k+1)r_f B_2^n},\mathds{1}_{R_f}).
\end{equation}
For each $k\gr 0$,
\begin{align}\label{eq:lem-2-new}
N(\mathds{1}_{a(k+1)r_f B_2^n},\mathds{1}_{R_f})
&\ls N(a(k+1)r_f B_2^n,R_f)\\
\nonumber &\ls N\!\left(a(k+1)r_f B_2^n,\, \tfrac{1}{\sqrt{\kappa_nt}} r_f B_2^n\right)
\,N(r_f B_2^n,\, \sqrt{\kappa_nt} R_f).
\end{align}
Volumetric covering bounds give
$$N\!\left(a(k+1)r_f B_2^n,\,\tfrac{1}{\sqrt{\kappa_nt}} r_f B_2^n\right)\ls \bigl(2\sqrt{\kappa_nt}\,a(k+1)+1\bigr)^n,$$
and by \eqref{eq:R-covering-b},
$$N(r_f B_2^n,\sqrt{\kappa_nt}R_f)\ls \exp\!\left(\frac{c_1\kappa_nn}{t}\right).$$
Inserting the above into \eqref{eq:lem-2-new} we conclude that
$$N(\mathds{1}_{a(k+1)r_f B_2^n},\mathds{1}_{R_f})\ls \bigl(2\sqrt{\kappa_nt}\,a(k+1)+1\bigr)^n
\exp\!\left(\frac{c\kappa_n n}{t}\right),$$
Recall that $\vol_n(R_f)^{1/n}\approx 1$, which implies $r_f\approx\sqrt{n}$. Substituting into \eqref{eq:fcover-2.2} gives
$$N(g,\mathds{1}_{R_f})\ls \exp\!\left(\frac{c\kappa_n n}{t}\right)
\sum_{k=0}^{\infty}\bigl(e^{-c_0 a^2 k^2 + 2\sqrt{\kappa_nt}\,a(k+1)}\bigr)^{n}.$$
Choosing $a=c_2\sqrt{\kappa_nt}$ (for an absolute constant $c_2>0$ sufficiently large) makes the series bounded by an
absolute constant. Hence
$$N(g,\mathds{1}_{R_f})\ls C \exp\!\left(\frac{c\kappa_n n}{t}\right).$$
Inserting into \eqref{eq:fcover-2.1} yields the desired estimate.
\end{proof}

\begin{remark}\label{rem:fcover-2}\rm (i) In the proof of Proposition~\ref{prop:fcover-2} we used only the bound
$N(r_f B_2^n, t R_f)\ls \exp\left( \tfrac{c_1 \kappa_n^2 n}{t^2}\right)$ from \eqref{eq:R-covering-b}.
However, note that in \eqref{eq:need-1} we need to assume that $f(0)=\|f\|_{\infty}=1$, i.e. that $f$ is a geometric
log-concave function.

\smallskip 

(ii) Instead of decomposing $g(x)$ into a sum, as in \eqref{eq:annuli}, one may write
$$g(x)=\exp\left(-\tfrac{1}{2}|x|^2\right)=\int_0^{\infty}se^{-s^2/2}\mathds{1}_{sB_2^n}(x,s)\,ds$$
and then estimate the functional covering number $N(g,\mathds{1}_{R_f})$ writing
$$N(g,\mathds{1}_{R_f})\ls \int_0^{\infty}se^{-s^2/2}N(\mathds{1}_{sB_2^n},\mathds{1}_{R_f})\,ds.$$
Making the change of variables $s=aur_f$ and using the same argument as in the proof above, we get
$$N(\mathds{1}_{aur_f B_2^n},\mathds{1}_{R_f})\ls \bigl(2\sqrt{\kappa_nt}\,au+1\bigr)^n
\exp\!\left(\frac{c\kappa_n n}{t}\right),$$
and recalling that $r_f\approx\sqrt{n}$ we see that
$$N(g,\mathds{1}_{R_f})\ls \exp\!\left(\frac{c\kappa_n n}{t}\right)
\int_0^{\infty}\bigl(e^{-c_0 a^2u^2 + 3\sqrt{\kappa_nt}\,au}\bigr)^{n}du.$$
Choosing $a=c_2\sqrt{\kappa_nt}$ (for an absolute constant $c_2>0$ sufficiently large) makes 
the last integral bounded by an absolute constant, and Proposition~\ref{prop:fcover-2} follows.
\end{remark}

\begin{proposition}\label{prop:fcover-3}
Let $f:\mathbb{R}^n\to[0,\infty)$ be a centered geometric log-concave function such that  
$\vol_n(R_f)^{1/n}\approx 1$ and $R_f$ satisfies 
\begin{equation}N((R_f)^{\circ},\, t\,r^{\circ}B_2^n)\ls \exp\!\left(\frac{\delta_n^2 n}{t^2}\right) 
\quad \text{and}\quad  
N(r^{\circ}B_2^n,\, (R_f)^{\circ})\ls \exp\!\left(\frac{\gamma_n^2 n}{t^2}\right)\label{eq:R-covering-c-d}\end{equation}
for some $\gamma_n,\delta_n\gr 1$ and every $t\gr 1$, where $r^{\circ}B_2^n$ is the Euclidean ball having the same 
volume as $(R_f)^{\circ}$.  Define
$$R_{f^{\ast}}:= \{x\in\mathbb{R}^n:\, f^{\ast}(x)\gr e^{-50n}\},$$
where $f^{\ast}$ is the Legendre dual of $f$. Then $\vol_n(R_{f^{\ast}})^{1/n}\approx 1$ and 
$R_{f^{\ast}}$ satisfies
\begin{equation}\label{eq:R-covering-e-f}
N(R_{f^{\ast}},\, t r^{\ast} B_2^n)\ls \exp\!\left(\frac{c\delta_n^2 n}{t^2}\right)
\quad \text{and}\quad 
N(r^{\ast}B_2^n,\, t R_{f^{\ast}})\ls  \exp\!\left(\frac{c\gamma_n^2 n}{t^2}\right)
\end{equation}
for every $t\gr 1$, where $r^{\ast}B_2^n$ is the ball having the same volume as $R_{f^{\ast}}$
and $c>0$ is an absolute constant.
\end{proposition}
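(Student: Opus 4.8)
The plan is to push all the information about $R_{f^{\ast}}$ through the Fradelizi--Meyer relation between the superlevel sets of a geometric log-concave function and those of its Legendre dual, which in the present normalization reads
$$50n\,(R_f)^{\circ}\subseteq R_{f^{\ast}}\subseteq 100n\,(R_f)^{\circ}$$
(this is elementary from $R_f=\{\varphi\ls 50n\}$, $R_{f^{\ast}}=\{\mathcal L\varphi\ls 50n\}$, $\varphi\gr 0$ and $\varphi(0)=0$; see \cite{Fradelizi-Meyer-2008} and the discussion in Section~\ref{section-1}). This double inclusion already shows that $R_{f^{\ast}}$ is a convex body with $0\in\operatorname{int}(R_{f^{\ast}})$; taking polars it gives $\tfrac1{100n}R_f\subseteq (R_{f^{\ast}})^{\circ}\subseteq\tfrac1{50n}R_f$; and comparing the volumes of the three bodies in the inclusion it gives
$$50n\,r^{\circ}\ls r^{\ast}\ls 100n\,r^{\circ}.$$
Thus $R_{f^{\ast}}$ and $n(R_f)^{\circ}$, and likewise the balls $r^{\ast}B_2^n$ and $n\,r^{\circ}B_2^n$, agree up to absolute constants, so the whole statement should reduce, after rescaling, to the hypotheses \eqref{eq:R-covering-c-d} on $(R_f)^{\circ}$.

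For the volume assertion I would argue as follows. By the Bourgain--Milman inequality, $\vol_n(R_f)\vol_n((R_f)^{\circ})\gr c^n\omega_n^2$; for the matching upper bound I would use the Blaschke--Santal\'o inequality, but since $R_f$ is not assumed centered I would route it through Lemma~\ref{lem:bounded-gd}: applying Santal\'o to the centered body $K_{n+1}(f)$ and paying an absolute constant per dimension for $d_{{\rm G}}(R_f,K_{n+1}(f))\ls C$ yields $\vol_n(R_f)\vol_n((R_f)^{\circ})\ls C^n\omega_n^2$. Combining these with the hypothesis $\vol_n(R_f)^{1/n}\approx 1$ and with $\omega_n^{1/n}\approx n^{-1/2}$ gives $\vol_n((R_f)^{\circ})^{1/n}\approx\omega_n^{2/n}\approx n^{-1}$, whence by the Fradelizi--Meyer inclusion $\vol_n(R_{f^{\ast}})^{1/n}\approx n\cdot n^{-1}=1$. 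The same computation records $r^{\ast}\approx n^{1/2}$ and $r^{\circ}\approx n^{-1/2}$, consistently with the displayed bounds on $r^{\ast}$.

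The two covering estimates then follow by bookkeeping with monotonicity, scaling and submultiplicativity of covering numbers of convex bodies. For the first, using $R_{f^{\ast}}\subseteq 100n(R_f)^{\circ}$ and then $r^{\ast}\gr 50n\,r^{\circ}$,
$$N(R_{f^{\ast}},\,t r^{\ast}B_2^n)\ls N\!\Bigl(100n(R_f)^{\circ},\,t r^{\ast}B_2^n\Bigr)=N\!\Bigl((R_f)^{\circ},\,\tfrac{t r^{\ast}}{100n}B_2^n\Bigr)\ls N\!\Bigl((R_f)^{\circ},\,\tfrac t2 r^{\circ}B_2^n\Bigr),$$
and for $t\gr 2$ the first inequality of \eqref{eq:R-covering-c-d}, read with parameter $t/2\gr 1$, bounds this by $\exp(4\delta_n^2 n/t^2)$. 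For the second, using $R_{f^{\ast}}\supseteq 50n(R_f)^{\circ}$ and $r^{\ast}\ls 100n\,r^{\circ}$,
$$N(r^{\ast}B_2^n,\,t R_{f^{\ast}})\ls N\!\Bigl(r^{\ast}B_2^n,\,50nt(R_f)^{\circ}\Bigr)=N\!\Bigl(\tfrac{r^{\ast}}{50n}B_2^n,\,t(R_f)^{\circ}\Bigr)\ls N\!\Bigl(r^{\circ}B_2^n,\,\tfrac t2(R_f)^{\circ}\Bigr),$$
and for $t\gr 2$ the second inequality of \eqref{eq:R-covering-c-d} (in which, as the exponent makes clear, the covering body is $t(R_f)^{\circ}$), with parameter $t/2$, bounds this by $\exp(4\gamma_n^2 n/t^2)$. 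On the remaining range $1\ls t\ls 2$ one cannot feed a sub-unit parameter into \eqref{eq:R-covering-c-d}, so I would instead split off a crude factor: e.g. $N((R_f)^{\circ},\tfrac12 r^{\circ}B_2^n)\ls N((R_f)^{\circ},r^{\circ}B_2^n)\,N(r^{\circ}B_2^n,\tfrac12 r^{\circ}B_2^n)\ls\exp(\delta_n^2 n)\,5^n$, and symmetrically $N(r^{\circ}B_2^n,\tfrac t2(R_f)^{\circ})\ls N(r^{\circ}B_2^n,(R_f)^{\circ})\,N((R_f)^{\circ},\tfrac12(R_f)^{\circ})\ls\exp(\gamma_n^2 n)\,C^n$ (a standard volumetric bound); since $\delta_n,\gamma_n\gr1$ and $1/t^2\gr1/4$ on this range, both are absorbed into $\exp(c\,\delta_n^2 n/t^2)$ and $\exp(c\,\gamma_n^2 n/t^2)$ once $c$ is taken large enough.

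I do not expect a genuine obstacle here; the two points that demand care are, first, the bookkeeping of the factor $n$, which has to cancel exactly against the rescaling $r^{\ast}\approx n\,r^{\circ}\approx\sqrt n$ so that the exponents remain of the form $n/t^2$ with the \emph{same} parameters $\delta_n$, $\gamma_n$ as in the hypothesis rather than worse ones, and second, the fact that the Blaschke--Santal\'o inequality cannot be applied to $R_f$ directly (it is not assumed centered), which forces the detour through $K_{n+1}(f)$ via Lemma~\ref{lem:bounded-gd}. Everything else is a routine application of the properties of covering numbers recorded earlier.
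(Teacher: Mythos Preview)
Your argument is correct and follows essentially the same route as the paper: the key Fradelizi--Meyer inclusion $50n(R_f)^{\circ}\subseteq R_{f^{\ast}}\subseteq 100n(R_f)^{\circ}$, the detour through Lemma~\ref{lem:bounded-gd} to justify Blaschke--Santal\'o for the non-centered $R_f$, and the rescaling of the covering hypotheses on $(R_f)^{\circ}$ all match the paper's proof. You are in fact more careful than the paper in handling the range $1\ls t\ls 2$ separately and in flagging the missing $t$ in the second hypothesis of \eqref{eq:R-covering-c-d}; one small simplification for that range is to rescale rather than split through $(R_f)^{\circ}$, writing $N(r^{\circ}B_2^n,\tfrac12(R_f)^{\circ})=N(2r^{\circ}B_2^n,(R_f)^{\circ})\ls N(2r^{\circ}B_2^n,r^{\circ}B_2^n)\,N(r^{\circ}B_2^n,(R_f)^{\circ})\ls 5^n\exp(\gamma_n^2 n)$, which avoids the need to bound $N((R_f)^{\circ},\tfrac12(R_f)^{\circ})$ for a non-symmetric body.
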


\begin{proof}
Write $f=e^{-\varphi}$, where $\varphi$ is a convex geometric function.  
Then $f^{\ast}=e^{-\mathcal{L}\varphi}$, where $\mathcal{L}\varphi$ is the Legendre transform of $\varphi$.
By \cite[Lemma~8]{Fradelizi-Meyer-2008}, for all $s,t>0$,
\begin{equation}\label{eq:fradelizi-meyer}
t\{x:\varphi(x)\ls t\}^{\circ}\;\subseteq\;\{y:\mathcal{L}\varphi(y)\ls t\}
\;\subseteq\;(t+s)\{x:\varphi(x)\ls s\}^{\circ}.
\end{equation}
Setting $s=t=50n$ in \eqref{eq:fradelizi-meyer} gives
\begin{equation}\label{eq:star=n-polar}50n\,(R_f)^{\circ}\subseteq R_{f^{\ast}}\subseteq 100n\, (R_f)^{\circ},\end{equation}
which implies
$$50n\,r^{\circ} \ls r^{\ast} \ls 100n\,r^{\circ}.$$
Using \eqref{eq:R-covering-c-d} for $(R_f)^{\circ}$, we obtain
$$N(R_{f^{\ast}},\, t r^{\ast} B_2^n)\ls N(100n\,(R_f)^{\circ},\, 50nt\,r^{\circ}B_2^n)
\ls \exp\!\left(\frac{c\delta_n^2 n}{t^2}\right),$$
and similarly,
$$N(r^{\ast}B_2^n,\, t R_{f^{\ast}})\ls N(100n\,r^{\circ}B_2^n,\, 50nt\, (R_f)^{\circ})
\ls \exp\!\left(\frac{c\gamma_n^2 n}{t^2}\right).$$

From Lemma~\ref{lem:bounded-gd} we know that $R_f$ has bounded geometric distance from a centered
convex body. Therefore, we may apply the Blaschke--Santal\'{o} and Bourgain--Milman inequalities 
(up to an absolute constant) to $R_f$. Combining with \eqref{eq:star=n-polar} yields
$$\vol_n(R_{f^{\ast}})^{1/n}\approx n\,\vol_n((R_f)^{\circ})^{1/n}\approx \vol_n(R_f)^{-1/n}\approx 1,$$
completing the proof.
\end{proof}

\begin{remark}\label{rem:fradelizi-meyer}\rm The precise statement of \cite[Lemma~8]{Fradelizi-Meyer-2008} is that if $\varphi:\mathbb{R}^n\to (-\infty,\infty]$
is a convex function then, for all $s,t>0$,
\begin{equation}\label{eq:fradelizi-meyer-2}
t\{x:\varphi(x)\ls t+\varphi(0)\}^{\circ}\;\subseteq\;\{y:\mathcal{L}\varphi(y)\ls t-\min\varphi\}\quad
\text{and}\quad 
\{y:\mathcal{L}\varphi(y)\ls t\}\;\subseteq\;(t+s)\{x:\varphi(x)\ls s\}^{\circ}.
\end{equation}
Assuming that $f$ is a centered log-concave function with $\|f\|_{\infty}=1$, we have $\min\varphi =0$ and $e^{-\varphi(0)}=f(0)\gr e^{-n}\|f\|_{\infty}=e^{-n}$
(by Fradelizi's inequality) which implies $\varphi(0)\ls n$. Setting $s=t=50n$ in \eqref{eq:fradelizi-meyer-2} we get
$$50n(\tilde{R}_f)^{\circ}\subseteq R_{f^{\ast}}\subseteq 100n(R_f)^{\circ},$$
where $\tilde{R}_f=R_{51n}(f)$. Log-concavity of $f$ implies that $\tilde{R}_f\subseteq \frac{51}{50}R_f$, and hence 
$$R_{f^{\ast}}\approx n(R_f)^{\circ}.$$
Then, we can follow the above argument and conclude that Proposition~\ref{prop:fcover-3} holds true for every
centered log-concave function $f:\mathbb{R}^n\to[0,\infty)$ for which $\|f\|_{\infty}=1$,  $\vol_n(R_f)^{1/n}\approx 1$ and $R_f$ satisfies 
\eqref{eq:R-covering-c-d} (the assumption that $f(0)=\|f\|_{\infty}$ is not needed).
\end{remark}

We are now ready for the proof of the main theorem.

\begin{proof}[Proof of Theorem~$\ref{th:regular-M-covering}$]
Let $f:\mathbb{R}^n\to[0,\infty)$ be an isotropic geometric log-concave function.  
Then
$$f(0)=\|f\|_{\infty}=1,\qquad\int_{\mathbb{R}^n} f(x)\,dx = 1.$$
The convex body $K_{n+1}(f)$ is almost isotropic, and Remark~\ref{rem:almost} shows that
\begin{align}
\max\!\bigl\{ N(K_{n+1}(f),\, t r_n B_2^n),\,N(B_2^n,\, t r_n (K_{n+1}(f))^{\circ}) \bigr\}
&\ls \exp\!\left( \frac{\gamma_n^2 n}{t^2} \right),
\label{eq:Kn+1-a}
\\[0.3em]
\max\!\bigl\{ N(r_n B_2^n,\, t K_{n+1}(f)),\,N(r_n (K_{n+1}(f))^{\circ},\, t B_2^n) \bigr\}
&\ls \exp\!\left( \frac{\delta_n^2 n}{t^2} \right),
\label{eq:Kn+1-b}
\end{align}
for all $t\gr 1$, where $\gamma_{n} \ls c(\ln n)^{2}$ and $\delta_n\ls c\ln n$.

Combining \eqref{eq:R-K} with \eqref{eq:Kn+1-a} and \eqref{eq:Kn+1-b}, we deduce that  
$R_f$ satisfies \eqref{eq:R-covering-a} with $\kappa_n=\gamma_n$ and \eqref{eq:R-covering-b} with $\kappa_n=\delta_n$.  
Moreover,
$$\vol_n(R_f)^{1/n}\approx \vol_n(K_{n+1}(f))^{1/n}\approx 1.$$
Applying Propositions~\ref{prop:fcover-1} and \ref{prop:fcover-2}, we obtain
$$N(f,\, t\odot g)\ls \exp\!\left(\frac{c\gamma_nn}{t}\right)\quad \text{and} \ N(g,\, t\odot f)
\ls \exp\!\left(\frac{c\delta_n n}{t}\right)
\qquad\text{for all } t\gr 1.$$
Next, Proposition~\ref{prop:fcover-3} shows that $R_{f^{\ast}}$ satisfies  
\eqref{eq:R-covering-c-d}, and moreover
$$\vol_n(R_{f^{\ast}})^{1/n}\approx 1.$$
Applying Propositions~\ref{prop:fcover-1} and \ref{prop:fcover-2} to $f^{\ast}$ yields
$$N(f^{\ast},\, t\odot g)\ls \exp\!\left(\frac{c\delta_n n}{t}\right)\quad \text{and}\quad  N(g,\, t\odot f^{\ast})
\ls \exp\!\left(\frac{c\gamma_n n}{t}\right)
\qquad\text{for all } t\gr 1.$$
This completes the proof.
\end{proof}

For the proof of Theorem~\ref{th:regular-airy} we shall use the next result of V.~Milman and Rotem (see \cite[Proposition~11]{VMilman-Rotem-2013}).

\begin{lemma}\label{lem:rotem-1}Let $\varphi:\mathbb{R}^n\to [0,\infty)$ be a geometric convex function.
For every $t>0$,
$$(\{x:\varphi(x)<1/t\})^{\circ}\subseteq \{x:\varphi^{\circ}(x)\ls t\}
\subseteq 2\,(\{x:\varphi(x)<1/t\})^{\circ}.$$
\end{lemma}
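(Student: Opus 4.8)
The plan is to unwind the definition of the polar transform $\varphi^{\circ}=\mathcal{A}\varphi$ and relate its sublevel sets to the polar of the \emph{strict} sublevel sets of $\varphi$. Write $S_t=\{x:\varphi(x)<1/t\}$; since $\varphi$ is a geometric convex function, $\varphi(0)=0<1/t$, so $0\in S_t$ and $S_t$ is an open convex set containing the origin, hence $S_t^{\circ}$ is a closed convex set containing $0$ in its interior (possibly unbounded, but that is harmless for the inclusions). The statement asserts $S_t^{\circ}\subseteq\{x:\varphi^{\circ}(x)\ls t\}\subseteq 2S_t^{\circ}$.

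For the left inclusion, fix $x\in S_t^{\circ}$, i.e. $\langle x,y\rangle\ls 1$ for every $y$ with $\varphi(y)<1/t$. I want to show $\varphi^{\circ}(x)=\sup_{y\neq 0}\frac{\langle x,y\rangle-1}{\varphi(y)}\ls t$. Split the supremum according to whether $\varphi(y)<1/t$ or $\varphi(y)\gr 1/t$. In the first case $\langle x,y\rangle-1\ls 0\ls t\varphi(y)$ by the defining property of $S_t^{\circ}$ (one should also handle $\varphi(y)=0$, where the quotient is interpreted via the supremum convention, but there $\langle x,y\rangle\ls 1$ forces the expression $\le$ anything nonnegative appropriately — this is the only mildly delicate bookkeeping point). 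In the second case, $\frac{\langle x,y\rangle-1}{\varphi(y)}\ls \frac{\langle x,y\rangle}{\varphi(y)}$; to bound this by $t$ I scale: the point $y'=\lambda y$ with $\lambda=\frac{1}{t\varphi(y)}\ls 1$ (using convexity and $\varphi(0)=0$, $\varphi(y')\ls\lambda\varphi(y)=1/t$, hence $y'\in\overline{S_t}$), and by a limiting/approximation argument $\langle x,y'\rangle\ls 1$, i.e. $\langle x,y\rangle\ls t\varphi(y)$, which gives the bound. Taking the supremum yields $\varphi^{\circ}(x)\ls t$.

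For the right inclusion, take $x$ with $\varphi^{\circ}(x)\ls t$; I must show $x\in 2S_t^{\circ}$, i.e. $\langle x,y\rangle\ls 2$ for every $y$ with $\varphi(y)<1/t$. From $\varphi^{\circ}(x)\ls t$ we get $\langle x,y\rangle-1\ls t\varphi(y)$ for all $y$ (again after taking care of the $\varphi(y)=0$ case), so for $y\in S_t$ we have $\langle x,y\rangle\ls 1+t\varphi(y)<1+1=2$. Hence $x\in 2S_t^{\circ}$, completing the proof. The main obstacle is purely the careful handling of the degenerate denominator $\varphi(y)=0$ and of boundary points of the open set $S_t$ in the scaling argument; the geometric content is the elementary observation that $\varphi^{\circ}(x)\ls t$ is, after clearing denominators, exactly the affine inequality $\langle x,y\rangle\ls 1+t\varphi(y)$, which on $S_t$ pinches $\langle x,y\rangle$ between the values defining $S_t^{\circ}$ and $2S_t^{\circ}$.
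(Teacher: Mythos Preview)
The paper does not prove this lemma; it is quoted from \cite{VMilman-Rotem-2013} and used as a black box. Your direct verification is correct and is essentially the argument one finds in that reference: the right inclusion is immediate from clearing denominators in $\varphi^{\circ}(x)\ls t$ to get $\langle x,y\rangle\ls 1+t\varphi(y)<2$ on $S_t$, and the left inclusion follows from the scaling trick $y\mapsto \lambda y$ with $\lambda=1/(t\varphi(y))$ combined with convexity and $\varphi(0)=0$. The two points you flag as delicate are indeed harmless: the boundary issue in the scaling step is handled because $S_t^{\circ}=(\overline{S_t})^{\circ}$ by continuity of the inner product, and the $\varphi(y)=0$ case is covered by the standard convention for the $\mathcal{A}$-transform (the quotient is $+\infty$ when $\langle x,y\rangle>1$ and does not contribute otherwise), which forces $\langle x,y\rangle\ls 1$ on $\{\varphi=0\}$ whenever $\varphi^{\circ}(x)<\infty$.
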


Consider the geometric log-concave function $f=e^{-\varphi}$. Applying Lemma~\ref{lem:rotem-1} with
$t=\frac{1}{50n}$ we get
\begin{equation}\label{eq:airy-1}(R_f)^{\circ}\subseteq \{x:\varphi^{\circ}(x)\ls 1/(50n)\}\subseteq 2(R_f)^{\circ}.\end{equation}
We define the scaled polar $\varphi_{\mathcal{A}}$ of $\varphi$ by 
$$\varphi_{\mathcal{A}}(x)=(50n)^2\varphi^{\circ}(x/n).$$
Note that $\varphi_{\mathcal{A}}(x)\ls 50n$ if and only if $\varphi^{\circ}(x/n)\ls 1/(50n)$. This shows that
\begin{equation}\label{eq:airy-2}n(R_f)^{\circ}\subseteq\{x:\varphi_{\mathcal{A}}(x)\ls 50n\}\subseteq 2n(R_f)^{\circ},\end{equation}
and hence, if we define $f_{\mathcal{A}}=e^{-\varphi_{\mathcal{A}}}$ we get
\begin{equation}\label{eq:airy-3}n(R_f)^{\circ}\subseteq R_{f_{\mathcal{A}}}\subseteq 2n(R_f)^{\circ}.\end{equation}

\begin{proof}[Proof of Theorem~$\ref{th:regular-airy}$]
We start as in the proof of Theorem~\ref{th:regular-M-covering}. Recall that
$$\vol_n(R_f)^{1/n}\approx \vol_n(K_{n+1}(f))^{1/n}\approx 1$$ 
and $R_f$ satisfies \eqref{eq:R-covering-a} with $\kappa_n=\gamma_n$ and \eqref{eq:R-covering-b} with $\kappa_n=\delta_n$.

\smallskip 

Combining \eqref{eq:airy-3} with the Blaschke--Santal\'{o} and Bourgain--Milman inequalities we get
\begin{equation*}
\vol_n(R_{f_{\mathcal{A}}})^{1/n}\approx 
n\,\vol_n((R_f)^{\circ})^{1/n}\approx
1 \big/ \vol_n(R_f)^{1/n}\approx 1.
\end{equation*}
In particular, if $r_{\mathcal{A}}$ denotes the radius of the ball that has volume $\vol_n(R_{f_{\mathcal{A}}})$,
we see that $r_{\mathcal{A}}\approx \sqrt{n}$.

Using \eqref{eq:R-covering-c-d} for $(R_f)^{\circ}$, we obtain
$$N(R_{f_{\mathcal{A}}},\, t r_{\mathcal{A}} B_2^n)\ls N(2n\,(R_f)^{\circ},\, tn\,r^{\circ}B_2^n)
\ls \exp\!\left(\frac{c\delta_n^2 n}{t^2}\right),$$
and similarly,
$$N(r_{\mathcal{A}}B_2^n,\, t R_{f_{\mathcal{A}}})\ls N(2n\,r^{\circ}B_2^n,\, tn\, (R_f)^{\circ})
\ls \exp\!\left(\frac{c\gamma_n^2 n}{t^2}\right).$$
Since $R_{f_{\mathcal{A}}}$ satisfies these bounds, and moreover
$$\vol_n(R_{f_{\mathcal{A}}})^{1/n}\approx 1,$$
applying Proposition~\ref{prop:fcover-1} with $\kappa_n=\delta_n$ and Proposition~\ref{prop:fcover-2} with $\kappa_n=\gamma_n$ to $f_{\mathcal{A}}$ yields
$$N(f_{\mathcal{A}},\, t\odot g)\ls \exp\!\left(\frac{c\delta_n n}{t}\right)\quad \text{and}\quad  N(g,\, t\odot f_{\mathcal{A}})
\ls \exp\!\left(\frac{c\gamma_n n}{t}\right)
\qquad\text{for all } t\gr 1.$$
This completes the proof.
\end{proof}

We now turn to the proofs of Theorems~\ref{th:isotropic-milman} and 
\ref{th:isotropic-milman-dual}. We shall use the fact that if $K$ is an 
isotropic convex body in $\mathbb{R}^n$, then
\begin{equation}\label{eq:survey}\max\big\{N(K,r_nB_2^n), N(r_nB_2^n,K), N( r_nK^{\circ},B_2^n),
N(B_2^n,r_nK^{\circ})\big\}\ls C^n\end{equation}
for some absolute constant $C>0$. This is a well-known consequence of the 
fact that $L_n \ls C$; see, for example, 
\cite[Theorem~3.3]{Giannopoulos-Pafis-Tziotziou-survey}.

\begin{proof}[Proof of Theorem~$\ref{th:isotropic-milman}$]
We begin as in the proof of Theorem~\ref{th:regular-M-covering}. 
Recall that $K_{n+1}(f)$ is almost isotropic (see Remark~\ref{rem:almost}), and hence satisfies 
\eqref{eq:survey}. Since $R_f$ is at bounded geometric distance from 
$K_{n+1}(f)$, we obtain $r_f \approx \sqrt{n}$, or equivalently,
$$\vol_n(R_f)^{1/n}\approx \vol_n(K_{n+1}(f))^{1/n}\approx 1,$$ 
and $R_f$ satisfies
\begin{equation}\label{eq:survey-Rf}\max\big\{N(R_f,r_fB_2^n), N(r_fB_2^n,R_f), N(r_n(R_f)^{\circ},B_2^n),
N(B_2^n,r_f(R_f)^{\circ})\big\}\ls C^n\end{equation}
for some absolute constant $C>0$. Following the proof of Proposition~\ref{prop:fcover-1} with $t=1$ and
\eqref{eq:R-covering-a} replaced by \eqref{eq:survey-Rf}, we deduce that
\begin{equation}\label{eq:M1}N(f,g)\ls C_1^n\end{equation}
for some absolute constant $C_1>0$.

Similarly, applying the proof of Proposition~\ref{prop:fcover-2} with $t=1$ 
and again replacing \eqref{eq:R-covering-a} by \eqref{eq:survey-Rf}, we find
\begin{equation}\label{eq:M2}N(g,f)\ls C_2^n\end{equation}
for some absolute constant $C_2>0$.

Next, recall from the proof of Proposition~\ref{prop:fcover-3} that
$$R_{f^{\ast}}\approx n\, (R_f)^{\circ},$$
and hence $r^{\ast} \approx n\, r^{\circ}$, where $r^{\ast}$ denotes the volume 
radius of $R_{f^{\ast}}$. In particular, $\vol_n(R_{f^{\ast}})^{1/n} \approx 1$. 
Since $(R_f)^{\circ}$ satisfies \eqref{eq:survey-Rf}, it follows that
$$N(R_{f^{\ast}},\, r^{\ast} B_2^n)\ls N(c_1n\,(R_f)^{\circ},\, c_2n\,r^{\circ}B_2^n)
\ls C_3^n,$$
and similarly,
$$N(r^{\ast}B_2^n,\, R_{f^{\ast}})\ls N(c_3n\,r^{\circ}B_2^n,\, c_4n\, (R_f)^{\circ})\ls C_4^n.$$
Repeating the proofs of \eqref{eq:M1} and \eqref{eq:M2} with $f^{\ast}$ in place 
of $f$ now yields
\begin{equation}\label{eq:M3-M4}N(f^{\ast},\, g)\ls C_5^n\quad \text{and}\quad  N(g,\,f^{\ast})
\ls C_6^n.\end{equation}
This completes the proof.\end{proof}

\begin{proof}[Proof of Theorem~$\ref{th:isotropic-milman-dual}$]We only need to prove that
\begin{equation}\label{eq:M5-M6}N(f_{\mathcal{A}},\, g)\ls C_7^n\quad \text{and}\quad  N(g,\,f_{\mathcal{A}})
\ls C_8^n.\end{equation}
The proof is similar to the second part of the proof of Theorem~\ref{th:isotropic-milman}, once we
recall that $$R_{f_{\mathcal{A}}}\approx n(R_f)^{\circ}$$
by \eqref{eq:airy-3}.
\end{proof}

\bigskip

\noindent {\bf Acknowledgement.} We are grateful to the referee for a careful and thoughtful reading of the manuscript. Their insightful comments and 
constructive suggestions have helped us weaken the assumptions and improve the estimates in some of the results. The second named author acknowledges support by a PhD scholarship
from the National Technical University of Athens.

\bigskip

\footnotesize
\bibliographystyle{amsplain}

\bigskip

\thanks{\noindent {\bf Keywords:} Covering numbers, log-concave functions, $M$-position, functional inequalities,
isotropic position, $MM^{\ast}$-estimate.

\smallskip

\thanks{\noindent {\bf 2020 MSC:} Primary 52A23; Secondary 46B06, 52A40, 52C17, 26B25.}

\bigskip

\bigskip

\noindent \textsc{Apostolos \ Giannopoulos}: School of Applied Mathematical and Physical
Sciences, National Technical University of Athens, Department of Mathematics, Zografou Campus, GR-157 80, Athens, Greece.

\smallskip

\noindent \textit{E-mail:} \texttt{apgiannop@math.ntua.gr}

\bigskip

\noindent \textsc{Natalia \ Tziotziou}: School of Applied Mathematical and Physical Sciences, National Technical University of Athens, Department of Mathematics, Zografou Campus, GR-157 80, Athens, Greece.

\smallskip

\noindent \textit{E-mail:} \texttt{nataliatz99@gmail.com}

\end{document}